\theoremstyle{plain}
\newtheorem{theorem}{Theorem}[section]
\newtheorem{lemma}[theorem]{Lemma}
\newtheorem{proposition}[theorem]{Proposition}
\newtheorem{corollary}[theorem]{Corollary}
\theoremstyle{definition}
\newtheorem{example}[theorem]{Example}
\theoremstyle{remark}
\newtheorem{remark}[theorem]{Remark}
\DeclareMathOperator{\dom}{\mbox{dom}}
\DeclareMathOperator{\codom}{\mbox{codom}}
\DeclareMathOperator{\rank}{\mbox{rank}}
\DeclareMathOperator{\ureg}{\mbox{ureg}}
\DeclareMathOperator{\codim}{\mbox{codim}}
\DeclareMathOperator{\nul}{\mbox{nullity}}
\DeclareMathOperator{\corank}{\mbox{corank}}
\begin{document}

\title[Unit-regular and semi-balanced elements in semigroups of transformations]
{Unit-regular and semi-balanced elements in various semigroups of transformations}

\author[Mosarof Sarkar]{\bfseries Mosarof Sarkar}
\address{Department of Mathematics, Central University of South Bihar, Gaya, Bihar, India}
\email{mosarofsarkar@cusb.ac.in}

\author[Shubh N. Singh]{\bfseries Shubh N. Singh}
\address{Department of Mathematics, Central University of South Bihar, Gaya, Bihar, India}
\email{shubh@cub.ac.in}


\begin{abstract}
Let $T(X)$ be the full transformation semigroup on a set $X$, and let $L(V)$ be the semigroup under composition of all linear transformations on a vector space $V$ over a field.
For a subset $Y$ of $X$ and a subspace $W$ of $V$, consider the semigroups $\overline{T}(X, Y) = \{f\in T(X)\colon Yf \subseteq Y\}$ and $\overline{L}(V, W) = \{f\in L(V)\colon Wf \subseteq W\}$ under composition. We describe unit-regular elements in $\overline{T}(X, Y)$ and $\overline{L}(V, W)$. Using these, we determine when $\overline{T}(X, Y)$ and $\overline{L}(V, W)$ are unit-regular. We prove that $f\in L(V)$ is unit-regular if and only if $\nul(f) = \corank(f)$. We alternatively prove that $L(V)$ is unit-regular if and only if $V$ is finite-dimensional. A semi-balanced semigroup is a transformation semigroup whose all elements are semi-balanced. We give necessary and sufficient conditions for $\overline{T}(X, Y)$, $\overline{L}(V, W)$ and $L(V)$ to be semi-balanced.
\end{abstract}

\subjclass[2010]{20M17, 20M20, 15A03, 15A04.}

\keywords{ Transformations; Linear operators; Invariant subsets; Invariant subspaces; Unit-regular elements; Semi-balanced transformations.}

\maketitle

\section{Introduction}
An element $s$ of a semigroup $S$ with identity is \emph{unit-regular} in $S$ if there exists a unit $u\in S$ such that $sus = s$.  A \emph{unit-regular semigroup} is a semigroup with identity in which every element is unit-regular.
The concept of unit-regularity, which was introduced by Ehrlich \cite{ehr68} within the context of rings, has consecutively received much attention from many semigroup theorists \cite{blyth83, chaiya-s19, chen-s74, alar-s80, fount02, hick97, mca76, mcfa84, tira-s79}. In fact, Alarcao \cite[Proposition 1]{alar-s80} proved that any semigroup with identity is unit-regular if and only if it is factorizable.

\vspace{0.1cm}

Let $X$ be a nonempty set. Denote by $T(X)$ the full transformation semigroup on $X$. The semigroup $T(X)$ and its subsemigroups are extremely important, since every semigroup can be embedded in some $T(Z)$ (cf. \cite[Theorem 1.1.2]{howie95}. In $1980$, Alarcao \cite[Proposition 5]{alar-s80} proved that $T(X)$ is unit-regular if and only if $X$ is finite. For a fixed nonempty subset $Y$ of $X$, Magill \cite{magill66} first studied  the semigroup $\overline{T}(X, Y)$ under composition consisting of all transformations $f\in T(X)$ such that $Y$ is invariant under $f$. Using symbols, 
\[\overline{T}(X, Y) = \{f\in T(X)\colon Yf \subseteq Y\}.\] 
If $Y=X$, then $\overline{T}(X, Y) = T(X)$. To this extent, the semigroup $\overline{T}(X, Y)$ may be regarded as a generalization of $T(X)$. Many interesting properties of the semigroup $\overline{T}(X, Y)$ and its certain subsemigroups have been investigated \cite{chinar-ae19, choom-13, hony11, nenth05, nenth06, shubh-aejm22, sun-as13, symon-as75}. In particular, Nenthein et al. \cite[Theorem 2.3]{nenth05} described
regular elements in $\overline{T}(X, Y)$.


Let $V$ be a vector space over a field. Denote by $L(V)$ the semigroup under composition consisting of all linear transformations on $V$. Ehrlich \cite[Theorem 4]{ehr68} proved that $L(V)$ is not unit-regular when $V$ is infinite-dimensional. Jampachon et al. \cite[Theorem 2]{jamp-sa01} proved that $L(V)$ is factorizable if and only if $V$ is finite-dimensional. If $V$ is finite-dimensional, Kemprasit \cite{kamp-sa02} directly proved that $L(V)$ is unit-regular. For a fixed subspace $W$ of $V$, analogous to the semigroup $\overline{T}(X, Y)$, Nenthein and Kemprasit \cite{nenth06} introduced the semigroup $\overline{L}(V, W)$ under composition consisting of all linear transformations $f\in L(V)$ such that $W$ is invariant under $f$. Using symbols,
\[\overline{L}(V, W) = \{f\in L(V)\colon Wf \subseteq W\}.\] If $W$ is trivial, then $\overline{L}(V, W) = L(V)$. To this extent, the semigroup $\overline{L}(V, W)$ may be regarded as a generalization of $L(V)$. The authors \cite[Proposition 3.1]{nenth06} described regular elements in $\overline{L}(V, W)$ and proved that $\overline{L}(V, W)$ is regular if and only if $W$ is trivial. Chaiya \cite[Theorem 11]{chaiya-s19} determined when $\overline{L}(V, W)$ is a unit-regular semigroup.
Several other properties of the semigroup $\overline{L}(V, W)$ have also been studied \cite{chaiya-s19, chinar-im18, hony12, pei12}.

\vspace{0.5mm}
Let $f\in T(X)$. The \emph{collapse} (resp. \emph{defect}) of $f$ is the cardinality of $X\setminus T_f$ (resp. $X\setminus Xf$), where $T_f$ is any transversal of the equivalence relation $\ker(f)$ on $X$ defined by $(x,y)\in \ker(f)$ if $xf = yf$. In $1998$, Higgins et al. \cite[p. 1356]{hig-how-rus98} called an element of $T(X)$ to be \emph{semi-balanced} if its collapse and defect are equal, and denoted the set of all semi-balanced elements of $T(X)$ by $B$. We say that a subsemigroup of $T(X)$ is \emph{semi-balanced} if all its elements are semi-balanced. It is clear that every subsemigroup of $T(X)$ is semi-balanced when $X$ is finite. The authors \cite[Theorem 3.3]{shubh-jaa22} proved that an element of $T(X)$ is unit-regular if and only if it is semi-balanced.

\vspace{0.5mm}
The aim of this paper is to describe unit-regular and semi-balanced elements in $\overline{T}(X, Y)$, $\overline{L}(V, W)$, and $L(V)$. The rest of this paper is organized as follows. In the next section, we define concepts, introduce notation, and list some necessary results. In Section $3$, we describe unit-regular elements in $\overline{T}(X, Y)$ and determine when $\overline{T}(X, Y)$ is unit-regular. In Section $4$, we describe unit-regular elements in $\overline{L}(V, W)$. Using this, we prove that $f\in L(V)$ is unit-regular if and only if $\nul(f) = \corank(f)$. We also alternatively deduce that $L(V)$ is unit-regular if and only if $V$ is finite-dimensional. We further determine when $\overline{L}(V, W)$ is unit-regular. In Section $5$, we give necessary and sufficient conditions for the semigroups $\overline{T}(X, Y)$, $\overline{L}(V, W)$, and $L(V)$ to be semi-balanced.

\section{Preliminaries and Notation}
The cardinality of a set $A$ is denoted by $|A|$. Given any sets $A$ and $B$, let $A\setminus B \colonequals \{x\in A\colon x\notin B\}$. The set of all positive integers (resp. rational numbers) is denoted by $\mathbb{N}$ (resp. $\mathbb{Q}$), and let $\aleph_{0}\colonequals |\mathbb{N}|$. A \emph{transversal} of an equivalence relation $\rho$ on a nonempty set $X$ is a subset of $X$ that contains exactly one element from each $\rho$-class. Given a semigroup $S$ with identity, the set of all units of $S$ is denoted by $U(S)$; the set of all unit-regular elements in $S$ is denoted by $\ureg(S)$. 

\vspace{0.5mm}
We compose maps from left to right and denote their composition by juxtaposition. Let $f\colon X \to Y$ be a map. The domain, codomain, and range of $f$ are denoted by $\dom(f)$, $\codom(f)$, and $R(\alpha)$, respectively. We use $xf$ to denote the image of an element $x$ of $X$ under $f$. 
Given any $A\subseteq \dom(f)$ (resp. $B\subseteq \codom(f)$), let $Af$ (resp. $Bf^{-1}$) denote the set $\{af\colon a\in A\}$ (resp. $\{x\in  \dom(f)\colon  xf \in B\}$). Moreover, we write $bf^{-1}$ instead of $\{b\} f^{-1}$ if $B=\{b\} \subseteq \codom(f)$.

\vspace{0.5mm}
Let $f\colon X \to X$ be a map and $A$ be a nonempty subset of $\dom(f)$. We say that $A$ is \emph{invariant} under $f$ if $Af \subseteq A$.
The \emph{restriction} of $f$ to $A$ is the map $f_{|_A} \colon A \to X$ defined by $a(f_{|_A}) = af$ for all $a \in A$. If $B \subseteq \codom(f)$ such that $Xf \subseteq B$, then the \emph{corestriction} of $f$ to $B$ is the map from $X$ to $B$ that agrees with $f$. Moreover, we use $f_{\upharpoonright_B}$ to denote the corestriction  of the map $f_{|_B}$ to $B$ if $B$ is invariant under $f$.
Note that if $f\in U(\overline{T}(X,Y))$, then $f_{\upharpoonright_Y}\in U(T(Y))$.

\vspace{0.5mm}

Let $f\colon X \to X$ be a map. Set $D(f) \colonequals X\setminus R(f)$. The \emph{defect} $\textnormal{d}(f)$ of $f$ is the cardinality of $D(f)$. The \emph{kernel} of $f$, denoted by $\ker(f)$, is an equivalence relation on $X$ defined by $\ker(f) \colonequals \{(x,y)\in X \times X \colon xf = yf\}$. The symbol $\pi(f)$ denotes the partition of $X$ induced by $\ker(f)$, and $T_f$ denotes any transversal of the equivalence relation $\ker(f)$. Note that $|X\setminus T_f|$ is independent of the choice of transversal of $\ker(f)$ (cf. \cite[p. 1356]{hig-how-rus98}), and $|T_f| = |R(f)|$. The \emph{collapse} $\textnormal{c}(f)$ of $f$ is the cardinality of $X\setminus T_f$ (cf. \cite[p. 1356]{hig-how-rus98}). It is clear that $\textnormal{c}(f) = 0$ if and only if $f$ is injective. If $\textnormal{c}(f) = \textnormal{d}(f)$, then we say that $f$ is \emph{semi-balanced} (cf. \cite[p. 1356]{hig-how-rus98}). Let $B(S)$ denote the set of all semi-balanced elements in a subsemigroup $S$ of $T(X)$. Notice that $B(S)=S\cap B$.


\vspace{0.5mm}
Throughout this paper, unless otherwise noted, let $V$ denote a vector space over an arbitrary field. We denote by $0$ the zero vector of $V$. The subspaces $\{0\}$ and $V$ of $V$ are called \emph{trivial} subspaces. The subspace spanned by a subset $T$ of $V$ is denoted by $\langle T \rangle$. Let $U$ be a subspace of $V$. We say that $U$ is a \emph{proper} subspace of $V$ if $U \neq V$. The dimensions of $U$ and the quotient space $V/U$ are denoted by $\dim(U)$ and $\codim_V(U)$, respectively. We denote by $V = U \oplus W$ the (internal) direct sum of subspaces $U$ and $W$ of $V$. If $V = U \oplus W$, then we say that $W$ is a \emph{complement} of $U$ in $V$.


\vspace{0.5mm}
Let $f\in L(V)$. Set $R(f) \colonequals \{vf\colon v\in V\}$ and $N(f) \colonequals \{v\in V\colon vf = 0\}$. Note that $N(f)$ is a subspace of $\dom(f)$, and $R(f)$ is a subspace of $\codom(f)$.
We write $\nul(f)$, $\rank(f)$, and $\corank(f)$ to denote $\dim(N(f))$, $\dim(R(f))$, and $\dim(V/R(f))$, respectively. If $V$ and $U$ are isomorphic vector spaces, then we write $V \approx U$. Let $B$ and $B'$ be bases for $V$. We write $\bar{g}$ to denote the unique linear transformation on $V$ obtained by linear extension of a map $g\colon B \to V$ or a map $g\colon B \to B'$ to the entire space $V$ (cf. \cite[Theorem 2.2]{s-roman07}). Note that if $g\colon B \to B'$ is a bijective map, then $\bar{g}\in U(L(V))$. If $f\in \overline{L}(V,W)$, then $f_{\upharpoonright_W}\in L(W)$ (cf. \cite[p. 73]{s-roman07}).


\vspace{0.5mm}
We refer the reader to \cite{howie95} and \cite{s-roman07} for undefined terminology in semigroup theory and linear algebra, respectively. In the rest of the paper, $Y$ is a nonempty subset of a set $X$, and $W$ is a subspace of a vector space $V$ over a field. We end this section by stating a list of results needed in the sequel.

\begin{proposition}\cite[Proposition 1]{alar-s80}\label{alarsf80-unit-facto}
	A monoid is unit-regular if and only if it is factorizable.
\end{proposition}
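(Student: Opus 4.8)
The plan is to prove both implications directly from the definitions, where I take a monoid $S$ to be \emph{factorizable} if every element of $S$ can be written as the product of a unit and an idempotent, i.e.\ $S = U(S)\,\{e\in S\colon e^2=e\}$. First I would record the elementary observation that a unit-regular monoid is automatically regular: if $sus=s$ for a unit $u\in U(S)$, then $u$ itself witnesses the regularity of $s$. Consequently no separate regularity hypothesis is needed, and the proof reduces to producing the required factorizations in one direction and the required units in the other.

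For the forward implication, suppose $S$ is unit-regular and fix $s\in S$. I would choose a unit $u\in U(S)$ with $sus=s$ and then check that $e\colonequals us$ is idempotent: indeed $e^2=(us)(us)=u(sus)=us=e$. Setting $g\colonequals u^{-1}\in U(S)$, I obtain $s = u^{-1}(us) = ge$, a product of a unit and an idempotent. Hence every $s\in S$ lies in $U(S)\,\{e\in S\colon e^2=e\}$, so $S$ is factorizable.

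For the converse, suppose $S$ is factorizable and fix $s\in S$, say $s=ge$ with $g\in U(S)$ and $e^2=e$. The natural candidate for the required unit is $u\colonequals g^{-1}$. A one-line computation then gives $sus = (ge)g^{-1}(ge) = g e (g^{-1}g) e = g e^2 = ge = s$, so $s$ is unit-regular; since $s$ was arbitrary, $S$ is unit-regular.

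I expect the argument to involve no real obstacle; the only points requiring care are the choice of convention for factorizability (whether the unit or the idempotent factor is placed on the left) and the verification that the chosen element is genuinely idempotent. Both constructions dualize without difficulty: if one instead defines factorizability via $S=\{e\in S\colon e^2=e\}\,U(S)$, then $su$ plays the role of the idempotent in the forward direction, and the same $u=g^{-1}$ works in the converse. Thus the equivalence is insensitive to the side on which the idempotent factor is placed.
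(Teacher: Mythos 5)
Your proof is correct. The paper does not prove this proposition at all --- it is quoted with a citation to D'Alarcao's paper --- and your argument (extracting the idempotent $us$ from $sus=s$, and conversely verifying $s\,g^{-1}s=s$ for a factorization $s=ge$) is precisely the standard one from that source, including the observation that the equivalence holds for either order of the unit--idempotent factorization.
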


\begin{proposition}\cite[Proposition 5]{alar-s80}\label{alar-sf80}
	The semigroup $T(X)$ is unit-regular if and only if $X$ is finite.
\end{proposition}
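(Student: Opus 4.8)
The plan is to prove both implications directly in terms of the defect $\textnormal{d}(f)=|X\setminus R(f)|$ and the collapse $\textnormal{c}(f)=|X\setminus T_f|$, exploiting the fact that a unit of $T(X)$ is precisely a permutation of $X$. The guiding observation is that, for a permutation $u$, the equation $fuf=f$ forces $u$ to carry the range $R(f)$ bijectively onto some transversal $T_f$ of $\ker(f)$; consequently the existence of such a $u$ reduces to whether the ``leftover'' pieces $X\setminus R(f)$ and $X\setminus T_f$ admit a bijection between them, that is, to whether $\textnormal{d}(f)=\textnormal{c}(f)$. This is exactly the semi-balanced condition, consistent with the characterization of unit-regular elements of $T(X)$ recalled in the introduction, and it is the quantity I would track throughout.

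For the sufficiency, suppose $X$ is finite and fix $f\in T(X)$. Since $|T_f|=|R(f)|$, finiteness gives $|X\setminus T_f|=|X|-|R(f)|=|X\setminus R(f)|$, so I may choose a bijection $\phi\colon X\setminus R(f)\to X\setminus T_f$. I would then define $u$ by sending each $y\in R(f)$ to the unique $t\in T_f$ with $tf=y$ (well defined because $f_{|_{T_f}}$ is a bijection onto $R(f)$) and by setting $u$ to agree with $\phi$ on $X\setminus R(f)$. This $u$ maps $R(f)$ bijectively onto $T_f$ and $X\setminus R(f)$ bijectively onto $X\setminus T_f$, so $u\in U(T(X))$, and a direct check gives $x(fuf)=((xf)u)f=tf=xf$ for every $x\in X$. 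Hence $fuf=f$, so $f$ is unit-regular; as $f$ was arbitrary, $T(X)$ is unit-regular.

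For the necessity, suppose $X$ is infinite; then there is an injective $f\in T(X)$ whose range is a proper subset of $X$, and I claim such an $f$ is not unit-regular, which already shows $T(X)$ is not unit-regular. Indeed, if $fuf=f$ for some $u\in U(T(X))$, then evaluating at $x$ gives $((xf)u)f=xf$, and injectivity of $f$ yields $xfu=x$ for all $x\in X$; as $x$ ranges over $X$ this shows $R(f)u=X$. Since $u$ is a bijection, $R(f)=(R(f)u)u^{-1}=Xu^{-1}=X$, contradicting that $f$ is not surjective. I expect this reverse direction to carry the real content of the statement: the finite case is routine, while the necessity rests on producing the witness and recognizing that its failure of semi-balance (here $\textnormal{c}(f)=0\neq\textnormal{d}(f)$) is precisely what obstructs unit-regularity, and that such a witness exists exactly because infinite sets admit injective, non-surjective self-maps.
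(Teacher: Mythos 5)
Your proof is correct. Note, however, that the paper does not prove this proposition at all: it is quoted verbatim from D'Alarcao \cite[Proposition 5]{alar-s80} as a known preliminary, and within the paper's own framework it would instead fall out immediately by combining Theorem \ref{ureg-smbal-equi} (an element of $T(X)$ is unit-regular if and only if it is semi-balanced) with Lemma \ref{transform-semibal-Xfinite} (every element of $T(X)$ is semi-balanced if and only if $X$ is finite) --- this is exactly how the paper packages the equivalence later in Theorem \ref{FTS-semibal-finite}. Your argument is a self-contained, elementary substitute for that machinery: in the finite case you construct the unit $u$ explicitly (the inverse of $f_{|_{T_f}}$ on $R(f)$, patched with a bijection $X\setminus R(f)\to X\setminus T_f$), and in the infinite case you show directly that $fuf=f$ with $u$ a permutation forces $R(f)u=X$, hence $R(f)=X$, so an injective non-surjective map cannot be unit-regular. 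The paper's route buys reusability --- the semi-balanced criterion is applied element-wise throughout Sections 3--5, and indeed the paper's proof of Theorem \ref{unit-reg-TXY}(ii) uses the same injective non-surjective witness but then invokes Theorem \ref{ureg-smbal-equi} rather than arguing directly as you do; your route buys independence from the cited results, at the cost of re-deriving in special form what those results give in general.
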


\begin{theorem}\cite[Theorem 2]{jamp-sa01}\label{facto-fin-dim}
	The semigroup $L(V)$ is factorizable if and only if $V$ is finite-dimensional.
\end{theorem}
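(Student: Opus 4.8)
The plan is to turn factorizability of $L(V)$ into the pointwise condition $\nul(f)=\corank(f)$, which holds for every $f\in L(V)$ precisely when $V$ is finite-dimensional. Recall that a monoid $S$ is factorizable when $S=U(S)E$, where $E$ denotes its set of idempotents; and that the idempotents of $L(V)$ are exactly the projections, i.e.\ the maps $e$ for which $V=R(e)\oplus N(e)$ with $e$ acting as the identity on $R(e)$ and as $0$ on $N(e)$. The two implications then split as a routine finite-dimensional construction and an explicit infinite-dimensional counterexample.

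For the forward implication, I would assume $V$ is finite-dimensional and fix $f\in L(V)$. Rank--nullity gives $\nul(f)=\dim(V)-\rank(f)=\corank(f)$ for free. Choosing complements $V=N(f)\oplus C$ and $V=R(f)\oplus D$, the map $f$ restricts to an isomorphism $C\to R(f)$, while $\dim(D)=\corank(f)=\nul(f)=\dim(N(f))$. I then take $e$ to be the projection onto $R(f)$ along $D$ and define a linear map $g$ that agrees with $f$ on $C$ and sends $N(f)$ isomorphically onto $D$. Because $g$ carries the decomposition $C\oplus N(f)$ bijectively onto $R(f)\oplus D$, it is a unit, and a direct check on $v=c+n$ with $c\in C$, $n\in N(f)$ shows $ge=f$. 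Hence every element of $L(V)$ lies in $U(L(V))E$, so $L(V)$ is factorizable.

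For the converse, I would assume $V$ is infinite-dimensional and exhibit a single non-factorizable element. Fixing a basis, let $f$ be a shift that is injective but not surjective, so that $\nul(f)=0$ while $\corank(f)>0$. If $f=ge$ with $g$ a unit and $e$ an idempotent, then bijectivity of $g$ forces $\nul(e)=\nul(f)=0$, so $e$ is injective; but an injective idempotent must be the identity, since $(v-ve)e=ve-ve^2=0$ and injectivity give $v=ve$ for all $v$. Then $f=g$ would be a unit, hence surjective, contradicting $\corank(f)>0$. Thus no such factorization exists and $L(V)$ is not factorizable. (Alternatively, one may first invoke Proposition~\ref{alarsf80-unit-facto} to replace ``factorizable'' by ``unit-regular'' throughout and argue with $fuf=f$.)

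The finite-dimensional direction is essentially bookkeeping once one notices that $\nul(f)=\corank(f)$ is automatic there, so the main obstacle is the infinite-dimensional converse: the heart of the matter is that rank--nullity no longer forces $\nul(f)=\corank(f)$, and the delicate point is to argue cleanly that an injective-but-not-surjective map admits \emph{no} unit--idempotent factorization, rather than merely failing the one construction used above.
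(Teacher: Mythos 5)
Your proof is correct, but note that the paper itself never proves this statement: it is imported as a known theorem of Jampachon--Saichalee--Sullivan and used as a black box (together with Proposition~\ref{alarsf80-unit-facto}) to obtain Theorem~\ref{unit-reg-fin-dim} and the sufficiency parts of Theorems~\ref{L-V-semi} and~\ref{L-V-W=unit-reg}. The meaningful comparison is therefore with the paper's own, independent route to the equivalent statement that $L(V)$ is unit-regular: there the authors first characterize unit-regular elements of $\overline{L}(V,W)$ (Theorem~\ref{char-unit-reg-lin-map}), specialize to $W=V$ to get that $f\in \ureg(L(V))$ if and only if $\nul(f)=\corank(f)$ (Corollary~\ref{nul-corank-unit-reg-ele}), and then combine this with Lemma~\ref{nul-corank-fin-dim}. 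Your argument is shorter and more self-contained. In the finite-dimensional direction you build the factorization $f=ge$ explicitly from the decompositions $V=C\oplus N(f)$ and $V=R(f)\oplus D$, using rank--nullity to match $\dim(N(f))=\dim(D)$; this is essentially Kemprasit's direct proof mentioned in the paper's introduction, and your verification that $ge=f$ is sound. In the infinite-dimensional direction your counterexample is the same injective-but-not-surjective map that appears in Lemma~\ref{nul-corank-fin-dim} and Theorem~\ref{L-V-semi}, but where the paper would appeal to the element-wise characterization of unit-regularity, you rule out \emph{any} factorization $f=ge$ by the clean observation that bijectivity of $g$ gives $N(e)=N(f)g=\{0\}$ and that an injective idempotent must be the identity --- this is exactly the ``no factorization exists at all'' point you flag, and it is handled correctly. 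What you lose relative to the paper's machinery is generality (Theorem~\ref{char-unit-reg-lin-map} yields information for arbitrary invariant subspaces $W$ and arbitrary elements); what you gain is a proof of the cited theorem needing nothing beyond rank--nullity, the description of idempotents of $L(V)$ as projections, and Alarcao's equivalence of factorizability with unit-regularity (which your parenthetical correctly notes can replace the direct $U(S)E(S)$ bookkeeping).
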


\begin{lemma}\cite[Lemma 3.1]{shubh-jaa22}\label{jaa22-range-trans}
	Let $f\colon A \to B$ and $g\colon B \to A$ be maps. If $fgf = f$, then $R(fg)$ is a transversal of the equivalence relation $\ker(f)$ on $A$. 
\end{lemma}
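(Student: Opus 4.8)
The plan is to verify directly the two conditions that make $R(fg)$ a transversal of $\ker(f)$: that $R(fg)$ meets every $\ker(f)$-class, and that it contains at most one point of each such class. The single consequence of the hypothesis I would use throughout is that, for every $a\in A$, the point $a(fg)\in R(fg)$ satisfies $\bigl(a(fg)\bigr)f = a(fgf) = af$; in words, applying $fg$ to a point never changes its $f$-image. This one identity is the engine of both halves of the argument.

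For the covering property, I take an arbitrary $a\in A$ and consider $a(fg)$, which lies in $R(fg)$ by definition. By the computation above, $\bigl(a(fg)\bigr)f = af$, so $a(fg)$ and $a$ lie in the same $\ker(f)$-class. Since $a$ was arbitrary, every $\ker(f)$-class contains a point of $R(fg)$.

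For the uniqueness property, I would write two elements of $R(fg)$ as $r_1 = a_1(fg)$ and $r_2 = a_2(fg)$ and assume $(r_1,r_2)\in\ker(f)$, that is, $r_1 f = r_2 f$. Applying the same identity gives $r_1 f = a_1 f$ and $r_2 f = a_2 f$, so the assumption forces $a_1 f = a_2 f$; feeding this into $r_i = a_i(fg) = (a_i f)g$ yields $r_1 = (a_1 f)g = (a_2 f)g = r_2$. Thus distinct points of $R(fg)$ lie in distinct $\ker(f)$-classes, and together with the covering property this shows that $R(fg)$ is a transversal of $\ker(f)$. The whole proof is a short chain of substitutions; the only thing to watch is the left-to-right composition convention, so that $a(fgf)$ is read as $((af)g)f$ and the hypothesis $fgf=f$ is applied at exactly the right spot. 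I do not expect any genuine obstacle beyond this bookkeeping.
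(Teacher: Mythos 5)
Your proof is correct: the single identity $\bigl(a(fg)\bigr)f = a(fgf) = af$ does indeed yield both that every $\ker(f)$-class meets $R(fg)$ and, via $r_i = (a_i f)g$, that it meets it in at most one point, which is exactly what a transversal requires. Note that the paper does not prove this lemma itself --- it is quoted from \cite[Lemma 3.1]{shubh-jaa22} --- so there is no in-paper argument to compare against; your direct verification is the standard one and is complete.
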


\begin{theorem}\cite[Theorem 3.3]{shubh-jaa22} \label{ureg-smbal-equi}
	Let $f\in T(X)$. Then $f$ is unit-regular in $T(X)$ if and only if $f$ is semi-balanced.
\end{theorem}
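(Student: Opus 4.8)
The plan is to prove both implications by relating the defining equation $fuf = f$, with $u$ ranging over the units $U(T(X))$ (that is, the bijections of $X$), directly to the cardinals $\textnormal{c}(f)$ and $\textnormal{d}(f)$.

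For the forward direction, I would suppose $f$ is unit-regular, say $fuf = f$ for some $u \in U(T(X))$. Applying Lemma~\ref{jaa22-range-trans} with $A = B = X$ and $g = u$, the set $R(fu) = R(f)u$ is a transversal of $\ker(f)$, so it may be taken as $T_f$. Since $u$ is a bijection it carries complements to complements and preserves cardinality, whence $\textnormal{c}(f) = |X \setminus R(f)u| = |(X \setminus R(f))u| = |X \setminus R(f)| = \textnormal{d}(f)$, and $f$ is semi-balanced.

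For the converse, I would suppose $\textnormal{c}(f) = \textnormal{d}(f)$ and construct a witnessing permutation. The equation $fuf = f$ is equivalent to requiring $(yu)f = y$ for every $y \in R(f)$, i.e.\ $u$ must send each $y \in R(f)$ into its preimage $yf^{-1}$. Choosing one representative from each $\ker(f)$-class produces a transversal $T_f$ together with a bijection $R(f) \to T_f$. It then remains to extend this to a bijection of the whole of $X$, which amounts to choosing a bijection between the leftover sets $D(f) = X \setminus R(f)$, of cardinality $\textnormal{d}(f)$, and $X \setminus T_f$, of cardinality $\textnormal{c}(f)$. Such a bijection exists precisely because the hypothesis gives $\textnormal{d}(f) = \textnormal{c}(f)$; gluing the two bijections yields a permutation $u$, and a one-line check confirms $fuf = f$, so $f$ is unit-regular.

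The main obstacle is cardinality-theoretic rather than conceptual: for infinite $X$ one must track the relevant cardinal equalities with care, and the whole argument hinges on the observation that the natural bijection $R(f) \to T_f$ extends to a permutation of $X$ if and only if $\textnormal{d}(f) = \textnormal{c}(f)$. This is exactly the point at which the semi-balanced hypothesis is both used and seen to be necessary, which is what makes the equivalence sharp.
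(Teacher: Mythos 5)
Your proof is correct. The paper does not reprove this theorem (it imports it from \cite{shubh-jaa22}), but your argument is essentially the same approach embodied in the paper's own machinery: the forward direction via Lemma~\ref{jaa22-range-trans} applied with $A=B=X$, and the converse via gluing the natural bijection $R(f)\to T_f$ (sending each $y\in R(f)$ into $yf^{-1}$) with a bijection $D(f)\to X\setminus T_f$ supplied by $\textnormal{c}(f)=\textnormal{d}(f)$ --- exactly the $Y=X$ specialization of the paper's proof of Theorem~\ref{unit-reg-el-TXY}.
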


\begin{lemma}\cite[Lemma 3.5]{shubh-jaa22} \label{transform-semibal-Xfinite}
	Every element of $T(X)$ is semi-balanced if and only if $X$ is finite.
\end{lemma}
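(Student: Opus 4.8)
The plan is to prove both implications directly, relying on the two cardinal decompositions of $|X|$ that the collapse and defect encode. For any $f\in T(X)$, the set $X$ is the disjoint union of a transversal $T_f$ of $\ker(f)$ with its complement, so $|X| = |T_f| + \textnormal{c}(f)$; likewise $X = R(f)\sqcup D(f)$ gives $|X| = |R(f)| + \textnormal{d}(f)$. Since a transversal meets each $\ker(f)$-class exactly once, $|T_f| = |R(f)|$, so the two decompositions share a common first summand. Whether $\textnormal{c}(f) = \textnormal{d}(f)$ can be read off from these identities is then entirely a question of whether that common summand can be cancelled.

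For the \emph{if} direction, suppose $X$ is finite. Then $|T_f| = |R(f)|$ is a finite cardinal, and finite cancellation in the two identities gives $\textnormal{c}(f) = |X| - |T_f| = |X| - |R(f)| = \textnormal{d}(f)$. Hence every $f\in T(X)$ is semi-balanced. I expect this direction to be routine; its only ingredient beyond the definitions is that natural numbers can be cancelled.

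For the \emph{only if} direction I would argue by contraposition: assuming $X$ infinite, I would exhibit a single $f\in T(X)$ that fails to be semi-balanced. The natural witness is an injective but non-surjective self-map. Concretely, fix a countably infinite subset $\{x_1, x_2, x_3, \dots\}\subseteq X$ and let $f$ shift it, $x_n f = x_{n+1}$, while fixing every point of $X$ outside this subset. One checks that $f$ is injective (the shifted images $\{x_2, x_3, \dots\}$ are disjoint from the fixed images, which lie outside the chosen subset), so $\textnormal{c}(f) = 0$ by the criterion that the collapse vanishes precisely for injective maps; on the other hand $x_1\notin R(f)$, so $\textnormal{d}(f)\geq 1$. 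Thus $\textnormal{c}(f)\neq \textnormal{d}(f)$ and $f$ is not semi-balanced.

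The only genuine obstacle is conceptual rather than computational: it lies in seeing why the \emph{if} direction cannot be reversed cardinal-by-cardinal. For infinite $X$ the equality $|T_f| = |R(f)|$ no longer forces $\textnormal{c}(f) = \textnormal{d}(f)$, since infinite cardinals are not cancellable; the shift map above is exactly an instance where the common summand $|R(f)| = \aleph_{0}$ absorbs the discrepancy between $\textnormal{c}(f) = 0$ and $\textnormal{d}(f) = 1$. Producing the countably infinite subset uses a weak form of the axiom of choice, which I would take as standing throughout.
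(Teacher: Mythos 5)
Your proof is correct; note that the paper does not prove this lemma itself but imports it from \cite[Lemma 3.5]{shubh-jaa22} as a preliminary, so there is no internal proof to compare against. Your argument coincides with the strategy the paper uses for the analogous statements it does prove (Theorems \ref{T-XY-semibal-finite} and \ref{L-V-semi}): for the hard direction, contraposition and an injective but non-surjective witness, which forces $\textnormal{c}(f)=0$ and $\textnormal{d}(f)\geq 1$; for the easy direction, cancellation of the common finite summand $|T_f|=|R(f)|$ in the two decompositions of $|X|$.
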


\section{Unit-regular elements in $\overline{T}(X, Y)$}

In this section, we describe unit-regular elements in $\overline{T}(X, Y)$. Using this, we determine when $\overline{T}(X, Y)$ is unit-regular. We begin with the following lemma.

\begin{lemma}\label{inv-map-unit}
	Let $f\in \overline{T}(X, Y)$. If $f\in \textnormal{ureg}(\overline{T}(X, Y))$, then $f_{\upharpoonright_Y} \in \textnormal{ureg}(T(Y))$.
\end{lemma}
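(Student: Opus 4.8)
The plan is to exploit the fact that restriction to the invariant set $Y$ defines a homomorphism from $\overline{T}(X, Y)$ into $T(Y)$, and that any monoid homomorphism sending units to units preserves unit-regularity.

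First I would unpack the hypothesis: since $f \in \ureg(\overline{T}(X, Y))$, there is a unit $u \in U(\overline{T}(X, Y))$ with $fuf = f$. Next I would check that the map $\phi \colon \overline{T}(X, Y) \to T(Y)$, $g \mapsto g_{\upharpoonright_Y}$, is a well-defined semigroup homomorphism. It is well defined and lands in $T(Y)$ precisely because $Y$ is invariant under every $g \in \overline{T}(X, Y)$, and the multiplicativity $(gh)_{\upharpoonright_Y} = g_{\upharpoonright_Y}\, h_{\upharpoonright_Y}$ is the only point needing a line of verification: for $y \in Y$ one has $y(gh) = (yg)h$ with $yg \in Y$, so both sides act identically on $Y$.

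Then I would apply $\phi$ to the relation $fuf = f$, obtaining $f_{\upharpoonright_Y}\, u_{\upharpoonright_Y}\, f_{\upharpoonright_Y} = f_{\upharpoonright_Y}$ in $T(Y)$. By the observation recorded in Section~2, the hypothesis $u \in U(\overline{T}(X,Y))$ forces $u_{\upharpoonright_Y} \in U(T(Y))$, so this identity exhibits $f_{\upharpoonright_Y}$ as a unit-regular element of $T(Y)$, which is exactly the desired conclusion.

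I do not anticipate a genuine obstacle here: once one notices that invariance of $Y$ makes restriction multiplicative, the conclusion is purely formal. The only things to handle with care are confirming that the restricted relation is read inside $T(Y)$ and that the restricted unit stays a unit, both of which are immediate from the fact that a unit of $\overline{T}(X,Y)$ maps $Y$ bijectively onto itself.
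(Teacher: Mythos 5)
Your proposal is correct and follows essentially the same route as the paper: apply restriction to the identity $fuf = f$ to get $f_{\upharpoonright_Y}\, u_{\upharpoonright_Y}\, f_{\upharpoonright_Y} = f_{\upharpoonright_Y}$, and invoke the Section~2 observation that units of $\overline{T}(X,Y)$ restrict to units of $T(Y)$. The only difference is cosmetic: you make explicit the multiplicativity of the restriction map, which the paper treats as immediate.
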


\begin{proof}[\textbf{Proof}]
	If $f\in \ureg(\overline{T}(X, Y))$, then there exists $g\in U(\overline{T}(X, Y))$ such that $fgf = f$. Therefore we get $f_{\upharpoonright_Y} g_{\upharpoonright_Y} f_{\upharpoonright_Y} = f_{\upharpoonright_Y}$ and $g_{\upharpoonright_Y}\in U(T(Y))$. Hence $f_{\upharpoonright_Y} \in \ureg(T(Y))$.
\end{proof}

Note that $\overline{T}(X, Y)$ contains the identity map on $X$. The next theorem describes unit-regular elements in $\overline{T}(X, Y)$.
\begin{theorem} \label{unit-reg-el-TXY}
	Let $f\in \overline{T}(X, Y)$. Then $f\in \textnormal{ureg}(\overline{T}(X, Y))$ if and only if
	\begin{enumerate}
		\item[\rm(i)] $f_{\upharpoonright_Y} \in \textnormal{ureg}(T(Y))$;
		\item[\rm(ii)] $R(f_{\upharpoonright_Y}) = Y \cap R(f)$;
		\item[\rm(iii)] $|C(f)\setminus C(f_{\upharpoonright_Y})|=|D(f)\setminus D(f_{\upharpoonright_Y})|$, where $C(f) \colonequals X \setminus T_f$ and $C(f_{\upharpoonright_Y}) \colonequals Y \setminus T_{(f_{\upharpoonright_Y})}$ for some transversals $T_f$ and $T_{(f_{\upharpoonright_Y})}$ of $\ker(f)$ and $\ker(f_{\upharpoonright_Y})$, respectively, such that $T_{(f_{\upharpoonright_Y})} = Y\cap T_f$.
	\end{enumerate}
\end{theorem}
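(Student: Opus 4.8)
The plan is to reduce the equation $fuf=f$ to a concrete combinatorial description of the witnessing unit $u$, and then to split $X$ into the two invariant blocks $Y$ and $X\setminus Y$. First I would record the units: since $u\in U(\overline{T}(X,Y))$ has an inverse also lying in $\overline{T}(X,Y)$, one checks that $U(\overline{T}(X,Y))$ is exactly the set of bijections $u$ of $X$ with $Yu=Y$. Next, composing left to right, for such a $u$ the relation $fuf=f$ is equivalent to demanding $yu\in yf^{-1}$ for every $y\in R(f)$; that is, $u$ must carry each $y\in R(f)$ back into its own $f$-fibre. By Lemma~\ref{jaa22-range-trans} (with $g=u$), $R(fu)$ is then a transversal of $\ker f$, and as $u$ is bijective it sends $D(f)=X\setminus R(f)$ bijectively onto $X\setminus R(fu)=C(f)$. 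Thus the whole problem becomes: decide when there is a bijection $u$ of $X$ preserving $Y$ that embeds $R(f)$ into the fibres of $f$ and maps $D(f)$ onto a collapse set $C(f)$.

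For necessity, assume $fuf=f$ with $u\in U(\overline{T}(X,Y))$. Condition (i) is immediate from Lemma~\ref{inv-map-unit}. For (ii), take $y\in Y\cap R(f)$: since $yu\in yf^{-1}$ and $yu\in Yu=Y$, the fibre $yf^{-1}$ meets $Y$, so $y\in Yf=R(f_{\upharpoonright_Y})$; together with the automatic inclusion $Yf\subseteq Y\cap R(f)$ this gives (ii). For (iii), I would set $T_f:=R(fu)$; because $u$ preserves $Y$, the points of $T_f$ lying in $Y$ are exactly $(Yf)u$, and using (ii) one verifies that this is a transversal of $\ker(f_{\upharpoonright_Y})$, so setting $T_{(f_{\upharpoonright_Y})}:=Y\cap T_f$ meets the compatibility requirement. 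With this choice one computes $C(f_{\upharpoonright_Y})=Y\cap C(f)$ and, via (ii), $D(f_{\upharpoonright_Y})=Y\cap D(f)$, whence $C(f)\setminus C(f_{\upharpoonright_Y})=C(f)\setminus Y$ and $D(f)\setminus D(f_{\upharpoonright_Y})=D(f)\setminus Y$. Finally, since $C(f)=D(f)u$ and $u$ preserves $X\setminus Y$, it restricts to a bijection $D(f)\setminus Y\to C(f)\setminus Y$, yielding the equality in (iii).

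For sufficiency, assume (i)--(iii) and construct $u$ directly. First I would fix a transversal $T_f$ of $\ker f$ chosen so that $Y\cap T_f$ is a transversal of $\ker(f_{\upharpoonright_Y})$; such a $T_f$ exists because, by (ii), a fibre $yf^{-1}$ meets $Y$ precisely when $y\in Yf$, so I may place the representative inside $Y$ over each point of $Yf$ and outside $Y$ otherwise. Define $u$ on $R(f)$ by sending $y$ to the unique point of $yf^{-1}\cap T_f$; this is a bijection $R(f)\to T_f$ restricting to bijections $Y\cap R(f)\to Y\cap T_f$ and $R(f)\setminus Y\to T_f\setminus Y$. It remains to extend $u$ to a bijection $D(f)\to C(f)$ preserving $Y$. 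On $Y$ I need a bijection $Y\cap D(f)\to Y\cap C(f)$, i.e. $D(f_{\upharpoonright_Y})\to C(f_{\upharpoonright_Y})$, which exists because (i) and Theorem~\ref{ureg-smbal-equi} applied to $T(Y)$ make $f_{\upharpoonright_Y}$ semi-balanced, so $|C(f_{\upharpoonright_Y})|=|D(f_{\upharpoonright_Y})|$; off $Y$ I need a bijection $D(f)\setminus Y\to C(f)\setminus Y$, supplied by (iii). Gluing the four pieces gives a bijection $u$ of $X$ with $Yu=Y$, so $u\in U(\overline{T}(X,Y))$, and by construction $yu\in yf^{-1}$ for all $y\in R(f)$, whence $fuf=f$.

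The hard part will be the bookkeeping that fuses the four pieces into a single bijection of $X$ respecting $Y$. The delicate points are verifying that $Y\cap T_f$ genuinely is a transversal of $\ker(f_{\upharpoonright_Y})$ -- this is exactly where (ii) enters, guaranteeing that fibres over $Yf$ meet $Y$ while fibres over $R(f)\setminus Y$ lie wholly outside $Y$ -- and checking that the decomposition $X=(Y\cap R(f))\sqcup(Y\cap D(f))\sqcup(R(f)\setminus Y)\sqcup(D(f)\setminus Y)$ of the domain is matched block by block with the analogous decomposition of the codomain into $T_f$- and $C(f)$-parts. Once the identifications $C(f_{\upharpoonright_Y})=Y\cap C(f)$ and $D(f_{\upharpoonright_Y})=Y\cap D(f)$ are in place, conditions (i) and (iii) furnish precisely the two cardinality equalities required on $Y$ and on its complement, so no further cardinal arithmetic is needed.
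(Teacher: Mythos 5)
Your overall strategy is the same as the paper's: in the necessity half you extract a transversal $T_f \colonequals R(fu)$ from the witnessing unit via Lemma~\ref{jaa22-range-trans} and transport $D(f)$ onto $C(f)$ by the bijection $u$; in the sufficiency half you glue a unit out of piecewise bijections, using Theorem~\ref{ureg-smbal-equi} on $Y$ to handle the block $D(f_{\upharpoonright_Y})\to C(f_{\upharpoonright_Y})$. Your necessity argument is correct, and your preliminary reduction (that for a bijection $u$ with $Yu=Y$ the equation $fuf=f$ holds if and only if $yu\in yf^{-1}$ for every $y\in R(f)$) is a clean way to organize the verifications; the identifications $C(f_{\upharpoonright_Y})=Y\cap C(f)$, $D(f_{\upharpoonright_Y})=Y\cap D(f)$, and the restriction of $u$ to a bijection $D(f)\setminus Y\to C(f)\setminus Y$ are all checked correctly.

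There is, however, a genuine gap in your sufficiency half. Condition (iii) is an \emph{existential} statement: it asserts the cardinality equality for \emph{some} pair of transversals $T_f^{0}$ and $T^{0}_{(f_{\upharpoonright_Y})}=Y\cap T_f^{0}$. You discard that pair, construct your own transversal $T_f$ (representatives placed inside $Y$ over points of $Yf$ and outside $Y$ elsewhere), and then claim the bijection $D(f)\setminus Y\to C(f)\setminus Y$ is ``supplied by (iii)''---but (iii) supplies it only for $C^{0}(f)=X\setminus T^{0}_f$, not for your $C(f)=X\setminus T_f$, and these are different sets. The discrepancy cannot be dismissed by appealing to the transversal-independence of collapses: knowing $|C(f)|=|C^{0}(f)|=\textnormal{c}(f)$ and $|Y\cap C(f)|=|Y\cap C^{0}(f)|=\textnormal{c}(f_{\upharpoonright_Y})$ does \emph{not} yield $|C(f)\setminus Y|=|C^{0}(f)\setminus Y|$ when these cardinals are infinite (for instance $\aleph_0=\aleph_0+n$ for every finite $n$), so your closing remark that ``no further cardinal arithmetic is needed'' is precisely where the argument is incomplete. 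The repair is easy in either of two ways. (a) Take $T_f$ to be the witness provided by (iii) itself, as the paper does; your placement construction is then unnecessary, because compatibility $T_{(f_{\upharpoonright_Y})}=Y\cap T_f$ together with (ii) already forces the representative over each $y\in Yf$ to lie in $Y$ and the representative over each $y\in R(f)\setminus Yf$ to lie outside $Y$, after which your gluing goes through verbatim. (b) Keep your transversal, but prove the missing lemma that $|X\setminus(T_f\cup Y)|$ is the same for all compatible transversals: match two such transversals fibre by fibre, noting that over $y\in Yf$ both leave $yf^{-1}\setminus Y$ untouched, while over $y\in R(f)\setminus Yf$ both delete exactly one point from $yf^{-1}$.
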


\begin{proof}[\textbf{Proof}]
	Suppose that $f\in \ureg(\overline{T}(X, Y))$. Then $fgf = f$ for some $g\in U(\overline{T}(X, Y))$.	
	
	\begin{enumerate}
		\item[\rm(i)] By Lemma \ref{inv-map-unit}, we get $f_{\upharpoonright_Y} \in \ureg(T(Y))$. 
		
		\vspace{0.5mm}
		\item[\rm(ii)] It is clear that $R(f_{\upharpoonright_Y}) \subseteq Y \cap R(f)$, since $Yf\subseteq Y$. For the reverse inclusion, let $y\in Y \cap R(f)$. Then $xf = y$ for some $x\in X$. Therefore, since $f = fgf$, $y\in Y$, and $Yg \subseteq Y$, we obtain
		\[y = xf = (xf)gf = (yg)f\in R(f_{\upharpoonright_Y}).\] This implies $Y \cap R(f) \subseteq R(f_{\upharpoonright_Y})$ as required.

		\vspace{0.5mm}
		\item[\rm(iii)] Recall first that $fgf = f$. Then $f_{\upharpoonright_Y} g_{\upharpoonright_Y} f_{\upharpoonright_Y} = f_{\upharpoonright_Y}$. Therefore  by Lemma \ref{jaa22-range-trans}, the sets $T_f\colonequals R(fg)$ and $T_{(f_{\upharpoonright_Y})} \colonequals R(f_{\upharpoonright_Y} g_{\upharpoonright_Y})$ are transversals of $\ker(f)$ and $ \ker(f_{\upharpoonright_Y})$, respectively. Notice that $T_{(f_{\upharpoonright_Y})}=Y\cap T_f$. Set $C(f) \colonequals X \setminus T_f$ and $C(f_{\upharpoonright_Y}) \colonequals Y \setminus T_{(f_{\upharpoonright_Y})}$. Since $g$ and $g_{\upharpoonright_Y}$ are bijections, we obtain \[D(f)g = (X\setminus R(f))g = Xg \setminus R(fg) = X \setminus T_f = C(f) \quad \text{and}\] \[D(f_{\upharpoonright_Y})g_{\upharpoonright_Y} = (Y\setminus R(f_{\upharpoonright_Y}))g_{\upharpoonright_Y} = Yg_{\upharpoonright_Y} \setminus R(f_{\upharpoonright_Y} g_{\upharpoonright_Y}) = Y \setminus T_{(f_{\upharpoonright_Y})} = C(f_{\upharpoonright_Y}).\]
		Therefore  \[(D(f)\setminus D(f_{\upharpoonright_Y}))g=
		D(f)g\setminus D(f_{\upharpoonright_Y})g= D(f)g\setminus D(f_{\upharpoonright_Y})g_{\upharpoonright_Y}= C(f)\setminus C(f_{\upharpoonright_Y}).\]
		This implies $|C(f)\setminus C(f_{\upharpoonright_Y})|=|D(f)\setminus D(f_{\upharpoonright_Y})|$ as required.
	\end{enumerate}

	Conversely, suppose that $f$ satisfies the given conditions. To prove $f\in \textnormal{ureg}(\overline{T}(X, Y))$, we will construct $g\in U(\overline{T}(X, Y))$ such that $fgf = f$. For this, we first note from (iii) that $T_f \subseteq X$ and $T_{(f_{\upharpoonright_Y})} \subseteq Y$ are transversals of $\ker(f)$ and $\ker(f_{\upharpoonright_Y})$, respectively. Therefore the corestrictions of the maps $f_{|_{T_f}}$ and ${(f_{\upharpoonright_Y})}_{|_{T_{(f_{\upharpoonright_Y})}}}$ to $R(f)$ and $R(f_{\upharpoonright_Y})$, respectively, are bijections. Let $g_0$ be the inverse of the corestriction map of $f_{|_{T_f}}$ to $R(f)$, and let $h_0$ be the inverse of the corestriction map of ${(f_{\upharpoonright_Y})}_{|_{T_{(f_{\upharpoonright_Y})}}}$ to $R(f_{\upharpoonright_Y})$. It is routine to verify that $xg_0 = xh_0$ for all $x\in R(f_{\upharpoonright_Y})$, since $T_{(f_{\upharpoonright_Y})}=Y\cap T_f$ by \rm(iii). Therefore $R(f_{\upharpoonright_Y})g_0=T_{(f_{\upharpoonright_Y})}$. Set $C(f) \colonequals X\setminus T_f$ and $C(f_{\upharpoonright_Y})\colonequals Y\setminus T_{(f_{\upharpoonright_Y})}$. Combining \rm(i) and Theorem  \ref{ureg-smbal-equi}, we see that $f_{\upharpoonright_Y}$ is semi-balanced. Therefore there exists a bijection $g_1\colon D(f_{\upharpoonright_Y})\to C(f_{\upharpoonright_Y})$. Also by \rm(iii), there exists a bijection $g_2\colon D(f)\setminus D(f_{\upharpoonright_Y})\to C(f)\setminus C(f_{\upharpoonright_Y})$. Using these three bijections $g_0, g_1$, and $g_2$, define a map $g\colon X\to X$ by
	\begin{align*}
		xg=
		\begin{cases}
			xg_0 & \text{if $x\in R(f)$}\\
			xg_1 & \text{if $x\in D(f_{\upharpoonright_Y})$}\\
			xg_2 & \text{if $x\in D(f)\setminus D(f_{\upharpoonright_Y})$}.
		\end{cases}
	\end{align*}
	It is easy to check that $g$ is bijective. Moreover, we obtain
	\[Yg=(R(f_{\upharpoonright_Y})\cup D(f_{\upharpoonright_Y}))g =R(f_{\upharpoonright_Y})g_0\cup D(f_{\upharpoonright_ Y})g_1=T_{(f_{\upharpoonright_Y})}\cup C(f_{\upharpoonright_Y})\subseteq Y.\]
	Therefore $g\in U(\overline{T}(X, Y))$. We can also verify in a routine manner that $fgf=f$. Hence $f\in \ureg(\overline{T}(X, Y))$.
\end{proof}


To describe the unit-regularity of $\overline{T}(X, Y)$, we need the following lemma.

\begin{lemma}\label{Y-singletn-whole}
	We have $R(f_{\upharpoonright_Y})=Y\cap R(f)$ for all $f\in \overline{T}(X,Y)$ if and only if $|Y|=1$ or $Y=X$.
\end{lemma}

\begin{proof}[\textbf{Proof}]
	Suppose that $R(f_{\upharpoonright_Y})=Y\cap R(f)$ for all $f\in \overline{T}(X,Y)$. For $|X|\le 2$, the statement is trivial. Let us now assume that $|X|\ge 3$. Suppose to the contrary that $Y$ is a proper subset of $X$ such that $|Y|\ge 2$.  Let $a$ and $b$ be distinct elements of $Y$. Define a map $f\colon X\to X$ by
	\begin{align*}
		xf =
		\begin{cases}
			a & \text{if $x\in Y$}\\
			b & \text{otherwise}.
		\end{cases}
	\end{align*}
	It is clear that $Y$ is invariant under $f$, since $a\in Y$. Therefore $f\in \overline{T}(X,Y)$. However, we see that $R(f_{\upharpoonright_Y})=Yf=\{a\}$ while $Y\cap R(f) = \{a,b\}$, a contradiction. Hence $|Y|=1$ or $Y=X$.
	
	\vspace{0.5mm}	
	The proof of the converse part is immediate.
\end{proof}


The following theorem describes the unit-regularity of $\overline{T}(X, Y)$.

\begin{theorem}\label{unit-reg-TXY}
	The semigroup $\overline{T}(X,Y)$ is unit-regular if and only if
	\begin{enumerate}
		\item[\rm(i)] $|Y|=1$ or $Y=X$;
		\item[\rm(ii)] $X$ is finite.
	\end{enumerate}
\end{theorem}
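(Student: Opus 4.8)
The plan is to characterize unit-regularity of $\overline{T}(X,Y)$ by combining Theorem \ref{unit-reg-el-TXY} (which describes individual unit-regular elements) with Lemma \ref{Y-singletn-whole} (which pins down exactly when condition (ii) of that theorem holds for \emph{every} element). The semigroup $\overline{T}(X,Y)$ is unit-regular precisely when every $f \in \overline{T}(X,Y)$ lies in $\textnormal{ureg}(\overline{T}(X,Y))$, so I would show that the three conditions (i)--(iii) of Theorem \ref{unit-reg-el-TXY} hold universally if and only if (i) $|Y|=1$ or $Y=X$, together with (ii) $X$ is finite.

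**Sufficiency.** Suppose $X$ is finite and ($|Y|=1$ or $Y=X$). I must verify conditions (i)--(iii) of Theorem \ref{unit-reg-el-TXY} for an arbitrary $f \in \overline{T}(X,Y)$. Since $X$ is finite, so is $Y$, hence by Lemma \ref{transform-semibal-Xfinite} every element of $T(Y)$ is semi-balanced, and by Theorem \ref{ureg-smbal-equi} every element of $T(Y)$ is unit-regular; in particular $f_{\upharpoonright_Y}\in \textnormal{ureg}(T(Y))$, giving (i). Condition (ii), $R(f_{\upharpoonright_Y}) = Y\cap R(f)$, follows for all $f$ from the ``if'' direction of Lemma \ref{Y-singletn-whole}, since we are assuming $|Y|=1$ or $Y=X$. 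For (iii): because $X$ is finite, the cardinality-matching condition $|C(f)\setminus C(f_{\upharpoonright_Y})| = |D(f)\setminus D(f_{\upharpoonright_Y})|$ should be derivable from the semi-balancedness of $f$ itself (every element of $T(X)$ is unit-regular, hence semi-balanced, when $X$ is finite) combined with that of $f_{\upharpoonright_Y}$; the finite-dimensional rank--nullity style bookkeeping $\textnormal{c}(f)=\textnormal{d}(f)$ and $\textnormal{c}(f_{\upharpoonright_Y})=\textnormal{d}(f_{\upharpoonright_Y})$ lets the leftover cardinalities balance.

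**Necessity.** Conversely, assume $\overline{T}(X,Y)$ is unit-regular. For (i): since condition (ii) of Theorem \ref{unit-reg-el-TXY} must then hold for \emph{every} $f\in \overline{T}(X,Y)$, the equality $R(f_{\upharpoonright_Y}) = Y\cap R(f)$ holds universally, so the ``only if'' direction of Lemma \ref{Y-singletn-whole} forces $|Y|=1$ or $Y=X$. For (ii), I would argue by contradiction using the finiteness characterization of $T(X)$ unit-regularity (Proposition \ref{alar-sf80}). If $X$ were infinite, I would exhibit an element $f\in \overline{T}(X,Y)$ that is not unit-regular in $\overline{T}(X,Y)$; the natural candidate is a transformation on $X$ that is not semi-balanced as a member of $T(X)$ but still leaves $Y$ invariant—for instance, one whose collapse and defect differ once the $Y$-invariant constraints are accounted for. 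Here the case split from (i) is convenient: if $Y=X$ then $\overline{T}(X,Y)=T(X)$ and Proposition \ref{alar-sf80} directly yields $X$ finite; if $|Y|=1$, say $Y=\{y_0\}$, then I can build an $f$ fixing $y_0$ whose behavior on the infinite set $X\setminus\{y_0\}$ violates condition (iii), contradicting unit-regularity.

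**Main obstacle.** I expect the delicate step to be the necessity of finiteness in the $|Y|=1$ case: I must produce a concrete non-unit-regular element of $\overline{T}(X,Y)$ when $X$ is infinite, and verify it fails condition (iii) of Theorem \ref{unit-reg-el-TXY} rather than merely failing to be semi-balanced in $T(X)$. The subtlety is that the collapse/defect bookkeeping for $f$ must be compared against that of $f_{\upharpoonright_Y}$, so the example has to be chosen so that the ``excess'' collapse and defect outside $Y$ genuinely fail to match in cardinality—most cleanly by arranging an infinite defect with zero (or strictly smaller) collapse outside the single point of $Y$.
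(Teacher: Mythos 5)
Your proposal is correct and follows essentially the same route as the paper's proof: both directions reduce to Theorem \ref{unit-reg-el-TXY}, with Lemma \ref{Y-singletn-whole} forcing $|Y|=1$ or $Y=X$, the case split $Y=X$ versus $|Y|=1$ handled via Proposition \ref{alar-sf80} and an injective-but-not-surjective map on $X\setminus Y$, and the sufficiency bookkeeping for condition (iii) done exactly by the finite cardinality subtraction you describe. The ``main obstacle'' you anticipate is in fact a non-issue: since every unit of $\overline{T}(X,Y)$ is a bijection of $X$ and hence a unit of $T(X)$, an element that fails to be semi-balanced in $T(X)$ is already non-unit-regular in $\overline{T}(X,Y)$ by Theorem \ref{ureg-smbal-equi}, which is precisely the shortcut the paper takes instead of verifying the failure of condition (iii) directly.
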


\begin{proof}
	Suppose that $\overline{T}(X,Y)$ is unit-regular.
	\begin{enumerate}
		\item[\rm(i)] Then every element of $\overline{T}(X,Y)$ is unit-regular. Therefore by Theorem \ref{unit-reg-el-TXY}, we have $R(f_{\upharpoonright_Y})=Y\cap R(f)$ for all $f\in \overline{T}(X,Y)$. Hence $|Y|=1$ or $Y=X$ by Lemma \ref{Y-singletn-whole}.	
		
		\vspace{0.5mm}
		\item[\rm(ii)] Recall from \rm(i) that $Y=X$ or $|Y|=1$. If $Y = X$, then $\overline{T}(X,Y)=T(X)$. Therefore $X$ is finite by Proposition \ref{alar-sf80}. Let us now assume that $|Y|=1$. In this case, it suffices to prove that $X\setminus Y$ is finite. Suppose to the contrary that $X\setminus Y$ is infinite. Then there exists a map $\alpha \colon X\setminus Y\to X\setminus Y$ that is injective but not surjective. Define a map $f\colon X \to X$ by
		\begin{eqnarray*}
			xf=
			\begin{cases}
				x        & \text{if $x\in Y$}\\
				x\alpha  &  \text{otherwise.}
			\end{cases}
		\end{eqnarray*}
		Clearly $f\in \overline{T}(X,Y)$. Moreover, since $\alpha$ is injective but not surjective, the map $f$ is injective but not surjective. This gives $\textnormal{c}(f)=0$ and $\textnormal{d}(f)\ge 1$. Therefore $f$ is not semi-balanced, so $f\notin \ureg(T(X))$ by Theorem \ref{ureg-smbal-equi}. This implies $f\notin  \ureg(\overline{T}(X,Y))$, a contradiction. Hence $X\setminus Y$ is finite, as required.
	\end{enumerate}
	
	Conversely, suppose that the given conditions hold, and let $f\in \overline{T}(X,Y)$. In view of Theorem \ref{unit-reg-el-TXY}, in order to show that $f \in \ureg(\overline{T}(X,Y))$, it suffices to prove that $f$ satisfies all the conditions in Theorem \ref{unit-reg-el-TXY}. First, we note from (ii) that $Y$ is finite. Therefore $f_{\upharpoonright_Y} \in \ureg(T(Y))$ by Proposition \ref{alar-sf80}, so Theorem \ref{unit-reg-el-TXY}\rm(i) holds. Next, we recall (i). Then $R(f_{\upharpoonright_Y})=Y\cap R(f)$ by Lemma \ref{Y-singletn-whole}, so Theorem \ref{unit-reg-el-TXY}\rm(ii) holds.
	Finally, we note from (ii) that both $X$ and $Y$ are finite. Therefore $\mbox{c}(f)=\mbox{d}(f)$ and $\mbox{c}(f_{\upharpoonright_Y})=\mbox{d}(f_{\upharpoonright_Y})$ by Lemma \ref{transform-semibal-Xfinite}. We further notice that there exist transversals $T_f$ and $T_{(f_{\upharpoonright_Y})}$ of $\ker(f)$ and $\ker(f_{\upharpoonright_Y})$, respectively, such that $T_{(f_{\upharpoonright_Y})}=Y\cap T_f$. Set $C(f)\colonequals X\setminus T_f$ and $C(f_{\upharpoonright_Y})\colonequals Y\setminus T_{(f_{\upharpoonright_Y})}$. Then $|C(f)|= \textnormal{c}(f)= \textnormal{d}(f) = |D(f)|$ and $|C(f_{\upharpoonright_Y})|= \mbox{c}(f_{\upharpoonright_Y})= \mbox{d}(f_{\upharpoonright_Y})=|D(f_{\upharpoonright_Y})|$. Therefore, since $X$ is finite by \rm(i), we obtain
	\[|C(f)\setminus C(f_{\upharpoonright_Y})|=|C(f)|-|C(f_{\upharpoonright_Y})|=|D(f)|-|D(f_{\upharpoonright_Y})|=|D(f)\setminus D(f_{\upharpoonright_Y})|,\]
	so Theorem \ref{unit-reg-el-TXY}\rm(iii) holds. Thus $f \in \ureg(\overline{T}(X,Y))$ by Theorem \ref{unit-reg-el-TXY}. Hence, since $f\in \overline{T}(X,Y)$ is arbitrary, we conclude that $\overline{T}(X,Y)$ is unit-regular.
\end{proof}

\section{Unit-regular elements in $\overline{L}(V, W)$}

In this section, we describe unit-regular elements in $\overline{L}(V,W)$. Using this, we prove that an element $f$ of $L(V)$ is unit regular if and only if $\nul(f) = \corank(f)$. Moreover, we alternatively prove that $L(V)$ is unit-regular if and only if $V$ is finite-dimensional. We also give a new proof of \cite[Theorem 11]{chaiya-s19}, which determine the unit regularity of $\overline{L}(V,W)$. We first prove a list of lemmas, which assist to describe unit-regular elements in $\overline{L}(V,W)$.


\begin{lemma}\label{kerf-iso-kergf}
	Let $f, g \in L(V)$. If $g$ is injective and $N(f)\subseteq R(g)$, then $N(gf) \approx N(f)$.
\end{lemma}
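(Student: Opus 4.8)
The plan is to exhibit an explicit linear isomorphism between $N(gf)$ and $N(f)$, realized by the restriction of $g$. Since maps are composed from left to right, for $v\in V$ we have $v(gf)=(vg)f$, so $v\in N(gf)$ precisely when $vg\in N(f)$; in other words, $N(gf)$ is exactly the preimage $\{v\in V\colon vg\in N(f)\}$ of $N(f)$ under $g$. The natural candidate for the isomorphism is therefore the map $\varphi\colon N(gf)\to V$ given by $v\varphi = vg$, i.e.\ the corestriction to $N(f)$ of the restriction of $g$ to $N(gf)$.

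First I would check that $\varphi$ is a well-defined linear map \emph{into} $N(f)$: if $v\in N(gf)$, then by the description above $vg\in N(f)$, so $\varphi$ does land in $N(f)$, and linearity is inherited directly from $g$. Next I would verify that $\varphi$ is bijective. Injectivity is immediate, since $\varphi$ is a restriction of the injective map $g$. For surjectivity I would invoke the hypothesis $N(f)\subseteq R(g)$: given any $w\in N(f)$, this inclusion yields some $v\in V$ with $vg=w$, and then $v(gf)=wf=0$ shows $v\in N(gf)$ with $v\varphi = w$. Hence $\varphi$ maps $N(gf)$ onto $N(f)$, and being a linear bijection it is a vector space isomorphism, giving $N(gf)\approx N(f)$.

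I do not expect any genuine obstacle here; the argument is short. The only point demanding care is to deploy each hypothesis in its correct role: injectivity of $g$ is precisely what makes $\varphi$ injective, while the inclusion $N(f)\subseteq R(g)$ is precisely what makes $\varphi$ surjective onto $N(f)$. Dropping either hypothesis breaks the corresponding half of the bijection, so the plan is really just to organize these two observations around the single map $\varphi$.
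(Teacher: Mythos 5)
Your proof is correct and follows essentially the same route as the paper: both define the map $\varphi\colon N(gf)\to N(f)$, $v\varphi = vg$, deduce injectivity from the injectivity of $g$, and use the hypothesis $N(f)\subseteq R(g)$ to pull each $w\in N(f)$ back to a preimage lying in $N(gf)$. In fact your write-up is slightly cleaner, since you verify explicitly that the preimage $v$ satisfies $v(gf)=wf=0$, a step the paper leaves as routine.
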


\begin{proof}[\textbf{Proof}]
	First, we prove that $vg \in N(f)$ whenever $v\in N(gf)$. For this, let $v\in N(gf)$. Then $v(gf) = 0$. Therefore $(vg)f = v(gf) = 0$, so $vg \in N(f)$.
	
	\vspace{0.5mm}
	Now, define a map $\varphi \colon N(gf) \to N(f)$ by $v\varphi = vg$ for all $v\in N(gf)$. It is clear that $\varphi$ is also  a monomorphism, since $g$ is a monomorphism. To prove $\varphi$ is surjective, let $w\in N(f)$. Then, since $N(f)\subseteq R(g)$, we have $w\in R(g)$. Since $g$ is injective, we see that the corestriction of $g$ to $R(g)$ is bijective. Therefore there exists a unique $u\in V$ such that $wg^{-1} = u$. It remains to show that $u \in N(gf)$ and $u\varphi = w$. These two conditions can easily verify in a routine manner. Hence $\varphi$ is surjective. Thus $N(gf) \approx N(f)$.
\end{proof}


Let $U$ and $V$ be isomorphic vector spaces. If $U'$ is a subspace of $U$ and $V'$ is a subspace of $V$ such that $U' \approx V'$, then it is not necessarily true that $U/U' \approx V/V'$ (cf. \cite[p. 93, line 7]{s-roman07}). However, we have the following lemma.

\begin{lemma}\label{spa-iso-quo-iso}
	Let $U'$ and $V'$ be subspaces of vector spaces $U$ and $V$, respectively. If there exists an isomorphism $f\colon U\to V$ such that $U'f=V'$, then $U/U'\approx V/V'$.
\end{lemma}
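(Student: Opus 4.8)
The plan is to realize the isomorphism $U/U'\approx V/V'$ via the map that $f$ naturally induces on quotients, and then invoke the first isomorphism theorem. Let $\pi\colon V\to V/V'$ be the canonical projection, defined by $v\pi = v+V'$ for all $v\in V$; this is a surjective linear map with $N(\pi)=V'$. I would then consider the composite $g\colonequals f\pi\colon U\to V/V'$, so that $ug = (uf)+V'$ for all $u\in U$. Since $f$ and $\pi$ are both surjective linear maps, $g$ is a surjective linear map, and in particular $R(g) = V/V'$.

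The key step is to compute the kernel $N(g)$ and show that it equals $U'$. First I would note that $U'\subseteq N(g)$: for $u'\in U'$ we have $u'f\in U'f = V'$, and hence $u'g = (u'f)+V' = V'$. For the reverse inclusion, suppose $u\in N(g)$, so that $uf\in V' = U'f$; then $uf = u'f$ for some $u'\in U'$, and since $f$ is injective this forces $u = u'\in U'$. Thus $N(g)=U'$. This computation uses the hypothesis $U'f = V'$ in both directions: the inclusion $U'f\subseteq V'$ yields $U'\subseteq N(g)$, while $V'\subseteq U'f$ together with the injectivity of $f$ yields $N(g)\subseteq U'$.

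Finally I would apply the first isomorphism theorem to the surjective linear map $g$: since $N(g)=U'$ and $R(g)=V/V'$, we obtain
\[
U/U' = U/N(g)\approx R(g) = V/V',
\]
which is the desired conclusion. There is no serious obstacle here; the only point requiring care is that the hypothesis is the full equality $U'f=V'$ rather than a one-sided inclusion, which is precisely what makes $N(g)=U'$ hold. An alternative, equally short route avoids the composite and instead defines $\bar{f}\colon U/U'\to V/V'$ directly by $(u+U')\bar{f} = uf+V'$, checks that it is well defined from $U'f\subseteq V'$, and verifies that it is bijective using the surjectivity of $f$ together with the injectivity of $f$ and the inclusion $V'\subseteq U'f$; I would be content to carry out whichever of the two is cleaner in context.
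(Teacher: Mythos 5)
Your proposal is correct and follows essentially the same route as the paper: the composite $g = f\pi$ is exactly the paper's map $\varphi\colon U\to V/V'$, $u\varphi = uf+V'$, and both arguments verify surjectivity, compute the kernel to be $U'$ (using $U'f\subseteq V'$ for one inclusion and injectivity of $f$ with $V'\subseteq U'f$ for the other), and conclude by the first isomorphism theorem. The only difference is cosmetic packaging — defining the map as a composite with the canonical projection rather than by an explicit formula.
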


\begin{proof}[\textbf{Proof}]
	Define a map $\varphi \colon U\to V/V'$ by $u\varphi =uf+V'$. By using linearity of $f$, we see that $\varphi$ is linear. To prove $\varphi$ is surjective, let $v+V'\in V/V'$. Then, since $v\in V$ and $f\colon U \to V$ is bijective, there exists a unique $u\in U$ such that $uf = v$. Therefore $u\varphi =uf+V'=v+V'$, so $\varphi$ is surjective. Thus $U/N(\varphi)\approx V/V'$ by the first isomorphism theorem.
	
	\vspace{0.5mm}
	Finally, we prove that $N(\varphi)=U'$. Let $u\in N(\varphi)$. Then $uf+V' = u\varphi =V'$, so $uf\in V'$. Therefore, since $f$ is bijective and $U'f=V'$, we get $u\in U'$. Thus $N(\varphi)\subseteq U'$. For the reverse inclusion, let $u\in U'$. Then, since $U'f=V'$, we have $uf\in V'$. Therefore $u\varphi=uf+V'=V'$, so $u\in N(\varphi)$. Thus $U'\subseteq N(\varphi)$. Hence $N(\varphi)=U'$ and consequently $U/U'\approx V/V'$.
\end{proof}


\begin{lemma}\label{subspace-transversal}
	Let $f\in \overline{L}(V,W)$. Then there exists a subspace $U$ of $V$ such that $U$ and $U \cap W$ are transversals of $\ker(f)$ and $\ker(f_{\upharpoonright_W})$, respectively.
	
\end{lemma}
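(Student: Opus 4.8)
The plan is to reduce the statement to a fact about complements and then construct the subspace $U$ by a compatible two-stage choice. The first observation is that a subspace of $V$ is a transversal of $\ker(f)$ precisely when it is a complement of $N(f)$ in $V$: the $\ker(f)$-classes are exactly the cosets of $N(f)$, and a subspace $U$ meets each coset in exactly one point if and only if $V = N(f)\oplus U$. Likewise, since $f_{\upharpoonright_W}\in L(W)$ has null space $N(f_{\upharpoonright_W}) = W\cap N(f)$, a subspace $U\cap W$ of $W$ is a transversal of $\ker(f_{\upharpoonright_W})$ if and only if it is a complement of $W\cap N(f)$ in $W$. Thus it suffices to produce a complement $U$ of $N(f)$ in $V$ with the extra property that $U\cap W$ is a complement of $W\cap N(f)$ in $W$.

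Writing $N\colonequals N(f)$, I would first choose a complement $U_W$ of $W\cap N$ inside $W$, so that $W = (W\cap N)\oplus U_W$. Because $U_W\subseteq W$, we get $U_W\cap N = U_W\cap(W\cap N) = \{0\}$, so the sum $U_W + N$ is direct. The second stage is to extend this choice: pick a complement $U'$ of $U_W\oplus N$ in $V$, so that $V = U_W\oplus N\oplus U'$, and set $U\colonequals U_W\oplus U'$. (Complements exist in full generality by extending a basis, which the paper uses freely.)

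It then remains to verify the two required properties. That $V = N\oplus U$ is immediate from $V = U_W\oplus N\oplus U'$ together with $U = U_W\oplus U'$, which yields both $U + N = V$ and $U\cap N = \{0\}$ by directness. For the compatibility condition I would show $U\cap W = U_W$: the inclusion $U_W\subseteq U\cap W$ is clear, and for the reverse I take $x\in U\cap W$, write $x = u + u'$ with $u\in U_W$ and $u'\in U'$, and observe that $u' = x - u\in W$, so $u'\in U'\cap W$.

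The crux is therefore the auxiliary claim $U'\cap W = \{0\}$. To establish it, take $u'\in U'\cap W$ and decompose it in $W$ as $u' = n_0 + u_0$ with $n_0\in W\cap N\subseteq N$ and $u_0\in U_W$; then $u' - u_0 - n_0 = 0$ with $u'\in U'$, $u_0\in U_W$, $n_0\in N$, and the directness of $V = U_W\oplus N\oplus U'$ forces $u' = 0$. Granting this, the $x$ above has $u' = 0$ and hence $x = u\in U_W$, so $U\cap W = U_W$, which is exactly the complement of $W\cap N$ fixed in the first stage. The delicate point is keeping the two complement choices compatible — a naive complement of $N$ in $V$ need not intersect $W$ in a complement of $W\cap N$ — and the device of first fixing $U_W$ and only then extending by a complement of $U_W\oplus N$ is precisely what makes the interaction with $W$ come out correctly.
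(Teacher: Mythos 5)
Your proof is correct, and it takes a genuinely different route from the paper's. The paper works from the \emph{range} side: it fixes a basis $B_1$ of $R(f_{\upharpoonright_W})$, chooses a preimage in $uf^{-1}\cap W$ for each $u\in B_1$ to span a subspace $U_1$, then extends $B_1$ to a basis $B_1\cup B_2$ of $R(f)$, chooses preimages of the vectors in $B_2$, and lets $U$ be the span of all these preimages; the transversal property is then verified element-by-element, by writing each $w\in R(f)$ in the basis and checking that $wf^{-1}\cap U$ is a singleton. You instead work from the \emph{kernel} side: you first observe that a subspace is a transversal of $\ker(f)$ exactly when it is a complement of $N(f)$ in $V$ (and similarly for $\ker(f_{\upharpoonright_W})$ with $N(f_{\upharpoonright_W})=W\cap N(f)$ inside $W$), which reduces the lemma to producing a complement $U$ of $N(f)$ with $U\cap W$ a complement of $W\cap N(f)$ in $W$; your nested construction $V=U_W\oplus N(f)\oplus U'$, $U\colonequals U_W\oplus U'$, together with the directness argument showing $U'\cap W=\{0\}$ and hence $U\cap W=U_W$, settles this. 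Your approach buys a coordinate-free argument with no basis or preimage choices, and it isolates a clean, reusable characterization (subspace transversal $=$ complement of the null space) that the paper never makes explicit; the paper's approach buys an explicit construction in which the transversal is exhibited directly as a pullback of a basis of the range, matching how such transversals are exploited later (via the isomorphism from $T_f$ onto $R(f)$). The one point a referee might ask you to expand is the opening equivalence between transversals and complements, but the verification is the standard two-line coset argument and is correct as you use it.
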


\begin{proof}[\textbf{Proof}]
	Let $B_1$ be a basis for $R(f_{\upharpoonright_W})$. If $B_1\neq \varnothing$, then we have $uf^{-1}\cap W\neq \varnothing$ for all $u\in B_1$. Therefore  fix $u'\in uf^{-1}\cap W$ for each $u\in B_1$, and let $C_1\colonequals \{u'\colon u\in B_1\}$. For $B_1=\varnothing$, we let $C_1\colonequals\varnothing$.
	
	\vspace{0.5mm}
	In either case, we prove that $U_1\colonequals \langle C_1\rangle$ is a transversal of $\ker(f_{\upharpoonright_W})$. If $C_1=\varnothing$, then $U_1=\{0\}$, and we are done. Assume that $C_1\neq\varnothing$, and let $w\in R(f_{\upharpoonright_W})$. Then, since $B_1$ is a basis for $R(f_{\upharpoonright_W})$, we have $w=c_1u_1+\cdots +c_mu_m$ for some $u_1,\ldots ,u_m\in B_1$, where $m\geq 0$. Let $w'= c_1u_1'+\cdots +c_mu_m' \in U_1$, where $u_1',\ldots, u_m'\in C_1$. Notice that $w'f=w$, so $w'\in wf^{-1}\cap U_1$. Therefore, by construction of basis $C_1$ for $U_1$, we see that $wf^{-1}\cap U_1=\{w'\}$ and consequently $|wf^{-1}\cap U_1|=1$. Hence, since $w$ is arbitrary, we conclude that the subspace $U_1$ of $V$ is a transversal of $\ker{(f_{\upharpoonright_W})}$. 
	
	\vspace{0.5mm}
	Now, consider $R(f)\setminus R(f_{\upharpoonright_W})$. Then there are two possibilities to consider.
	
	\vspace{0.5mm}
	\noindent Case 1: Suppose $R(f)\setminus R(f_{\upharpoonright_W})=\varnothing$. Set $U\colonequals U_1$. Then $U\cap W=U_1$. Therefore $U$ and $U\cap W$ are transversals of $\ker(f)$ and $\ker(f_{\upharpoonright_W})$, respectively.
	
	\vspace{0.5mm}
	\noindent Case 2: Suppose $R(f)\setminus R(f_{\upharpoonright_W})\neq\varnothing$. Recall that $B_1$ is a basis for $R(f_{\upharpoonright_W})$. Extend $B_1$ to a basis $B_1\cup B_2$ for $R(f)$, where $B_2\subseteq R(f)\setminus R(f_{\upharpoonright_W})$. Clearly $B_2\neq \varnothing$. Note that $vf^{-1}\neq \varnothing$ for all $v\in B_2$. Therefore fix $\bar{v}\in vf^{-1}$ for each $v\in B_2$. Set $C\colonequals C_1\cup \{\bar{v} \in vf^{-1}\colon v\in B_2\}$ and $U\colonequals \langle C\rangle$. It is easy to see that $U\cap W=U_1$. Therefore $U\cap W$ is a transversal of $\ker(f_{\upharpoonright_W})$. It remains to prove that $U$ is a transversal of $\ker(f)$. For this, let $w\in R(f)$. Then, since $B_1\cup B_2$ is a basis for $R(f)$, we have $w=c_1u_1+\cdots +c_mu_m+d_1v_1+\cdots +d_nv_n$ for some $u_1,\ldots ,u_m\in B_1$ and $v_1,\ldots ,v_n\in B_2$, where $m,n\geq 0$. Let $w'= c_1u_1'+\cdots +c_mu_m'+d_1\bar{v_1}+\cdots +d_n\bar{v_n} \in U$, where $u_1',\ldots, u_m',\bar{v_1},\ldots,\bar{v_n}\in C$. Notice that $w'f=w$, so $w'\in wf^{-1}\cap U$. Therefore, by construction of basis $C$ for $U$, we get $wf^{-1}\cap U=\{w'\}$ and consequently $|wf^{-1}\cap U|=1$. Hence, since $w$ is arbitrary, the subspace $U$ of $V$ is a transversal of $\ker{(f)}$.
\end{proof}

\begin{lemma}\label{bases-n-r-tran}
	Let $f\in L(V)$ and $T_f$ be a transversal of $\ker(f)$. If $B_0$ is a basis for $N(f)$ and $B$ is a basis for $R(f)$, then $B_0\cup (T_f\cap Bf^{-1})$ is a basis for $V$.
\end{lemma}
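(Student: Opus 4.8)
The plan is to exploit the fact that $f$ restricts to a bijection from the transversal $T_f$ onto $R(f)$, and then to lift the given basis $B$ of $R(f)$ back through this bijection. Write $B_1 \colonequals T_f \cap Bf^{-1}$ for brevity.

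First I would observe that, since $T_f$ is a transversal of $\ker(f)$ and each $\ker(f)$-class is carried by $f$ to a single element of $R(f)$, the corestriction of $f_{|_{T_f}}$ to $R(f)$ is a bijection. Under this bijection the set $B_1$ consists precisely of the unique preimages in $T_f$ of the basis vectors of $B$; hence $f$ maps $B_1$ bijectively onto $B$, and in particular $|B_1| = |B| = \rank(f)$. Confirming this step carefully from the definition of a transversal is really the only delicate point, since every subsequent claim reduces to applying $f$ (for independence) or lifting through this bijection (for spanning).

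Next I would verify linear independence, which simultaneously yields disjointness $B_0 \cap B_1 = \varnothing$. Suppose a finite combination $\sum_i a_i e_i + \sum_j b_j v_j = 0$ vanishes, with $e_i \in B_0$ and $v_j \in B_1$. Applying $f$ and using $e_i f = 0$ (as $e_i \in N(f)$) gives $\sum_j b_j (v_j f) = 0$. The vectors $v_j f$ are distinct (because $f$ is injective on $T_f \supseteq B_1$) elements of the linearly independent set $B$, so every $b_j = 0$; linear independence of $B_0$ then forces every $a_i = 0$. The same computation shows no element of $B_0$ can lie in $B_1$, since elements of $B_0$ map to $0$ while elements of $B_1$ map to the nonzero vectors of $B$.

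Finally, for spanning I would take an arbitrary $v \in V$, write $vf = \sum_j b_j w_j$ with $w_j \in B$, lift each $w_j$ to its unique preimage $u_j \in B_1$, and set $u \colonequals \sum_j b_j u_j \in \langle B_1 \rangle$. Then $uf = \sum_j b_j w_j = vf$, so $(v-u)f = 0$, giving $v - u \in N(f) = \langle B_0 \rangle$. Hence $v = u + (v-u) \in \langle B_0 \cup B_1 \rangle$. Combining independence with spanning shows that $B_0 \cup (T_f \cap Bf^{-1})$ is a basis for $V$, as claimed; as a consistency check, its size is $\nul(f) + \rank(f) = \dim(V)$ by the rank–nullity theorem.
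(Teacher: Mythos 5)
Your proof is correct, and it takes a genuinely more self-contained route than the paper's. Both arguments hinge on the same key observation — that $f$ maps a transversal $T_f$ of $\ker(f)$ bijectively onto $R(f)$, so that $B_1 \colonequals T_f\cap Bf^{-1}$ is precisely the set of $T_f$-preimages of the basis $B$ — but they diverge on the spanning step. The paper chooses a complement $U$ of $N(f)$ in $V$, invokes the standard fact that $B_0\cup B(f_{|_U})^{-1}$ is a basis for $V$ (with $f_{|_U}$ corestricted to $R(f)$, an isomorphism), and then shows each element of $B(f_{|_U})^{-1}$ differs from an element of $B_1$ by a vector of $N(f)$, hence lies in $\langle B_0\cup B_1\rangle$. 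You instead verify spanning directly: given $v\in V$, lift $vf$ through the bijection to produce $u\in\langle B_1\rangle$ with $(v-u)f=0$, so $v\in\langle B_0\cup B_1\rangle$; this avoids both the choice of complement and the appeal to the auxiliary basis-decomposition fact. Your treatment of linear independence is also tighter than the paper's: the paper notes only that $B_0\cap B_1=\varnothing$ and that $B_1$ is independent, and asserts these ``combine'' to give independence of the union — which, as literally stated, is not a valid inference (two disjoint independent sets can have a dependent union); the correct justification is exactly the apply-$f$ computation you spell out, which kills the $B_0$-terms and reduces to independence of $B$. So your proposal fills in a step the paper leaves loose, while reaching the same conclusion by a more elementary path.
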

\begin{proof}[\textbf{Proof}]
	We first note that $B_0\cap (T_f\cap Bf^{-1})=\varnothing$. It is easy to see that $T_f \cap Bf^{-1}$ is linearly independent. Combining the previous two statements shows that $B_0\cup (T_f\cap Bf^{-1})$ is linearly independent. It remains to prove that $\langle B_0\cup (T_f\cap Bf^{-1})\rangle = V$. For this, consider a complement $U$ of $N(f)$ in $V$. Then $B_0\cup B(f_{|_U})^{-1}$ is a basis for $V$, where the isomorphism $f_{|_U}$ also denotes the corestriction of $f_{|_U}$ to $R(f)$. Now, let $u\in B(f_{|_U})^{-1}$. Clearly $u\in U$. Then $v \colonequals u+u'\in T_f\cap Bf^{-1}$ for some $u'\in N(f)$. Therefore $u=v-u'\in \langle B_0\cup (T_f\cap Bf^{-1})\rangle$. Hence, since $u$ is arbitrary, we have $B(f_{|_U})^{-1}\subseteq \langle B_0\cup (T_f\cap Bf^{-1})\rangle$ and consequently $B_0 \cup B(f_{|_U})^{-1}\subseteq \langle B_0\cup (T_f\cap Bf^{-1})\rangle$. Thus $\langle B_0\cup (T_f\cap Bf^{-1})\rangle = V$ as required.
\end{proof}


\begin{lemma}\label{lin-res-unit}
	Let $f\in \overline{L}(V,W)$. If $f\in \textnormal{ureg}(\overline{L}(V,W))$, then $f_{\upharpoonright_W}\in \textnormal{ureg}(L(W))$.
\end{lemma}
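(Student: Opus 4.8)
The plan is to mirror the argument of Lemma~\ref{inv-map-unit} verbatim, replacing subsets and transformations by subspaces and linear maps; the proof should be essentially immediate. Suppose $f \in \ureg(\overline{L}(V,W))$, so that $fgf = f$ for some unit $g \in U(\overline{L}(V,W))$. Since $W$ is invariant under both $f$ and $g$, the restrictions $f_{\upharpoonright_W}$ and $g_{\upharpoonright_W}$ lie in $L(W)$, and restriction to $W$ is compatible with composition: for $\alpha,\beta \in \overline{L}(V,W)$ and $w \in W$ one has $w(\alpha\beta) = (w\alpha)\beta$ with $w\alpha \in W$, so that $(\alpha\beta)_{\upharpoonright_W} = \alpha_{\upharpoonright_W}\beta_{\upharpoonright_W}$. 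Applying this to $fgf = f$ yields $f_{\upharpoonright_W} g_{\upharpoonright_W} f_{\upharpoonright_W} = f_{\upharpoonright_W}$ inside $L(W)$.

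It then remains only to verify that $g_{\upharpoonright_W}$ is itself a unit of $L(W)$, after which $f_{\upharpoonright_W} \in \ureg(L(W))$ follows at once from the displayed identity. This is the one point worth spelling out, and it is exactly the linear analogue of the remark (recorded in the preliminaries) that $f \in U(\overline{T}(X,Y))$ forces $f_{\upharpoonright_Y} \in U(T(Y))$. Because $g \in U(\overline{L}(V,W))$, it has a two-sided inverse $h \in \overline{L}(V,W)$ with $gh = hg = \mathrm{id}_V$; in particular $W$ is invariant under $h$ as well. Restricting the identities $gh = hg = \mathrm{id}_V$ to $W$ and again invoking multiplicativity of restriction gives $g_{\upharpoonright_W} h_{\upharpoonright_W} = h_{\upharpoonright_W} g_{\upharpoonright_W} = \mathrm{id}_W$, so $g_{\upharpoonright_W}$ is invertible on $W$, i.e.\ $g_{\upharpoonright_W} \in U(L(W))$.

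I expect no genuine obstacle here. The only things requiring (brief) care are that the inverse of $g$ again preserves $W$, which is automatic from the definition of a unit of $\overline{L}(V,W)$ since that inverse is required to lie in $\overline{L}(V,W)$, and that restriction to the invariant subspace $W$ respects composition. Granting these two observations, the conclusion $f_{\upharpoonright_W} \in \ureg(L(W))$ is immediate from $f_{\upharpoonright_W} g_{\upharpoonright_W} f_{\upharpoonright_W} = f_{\upharpoonright_W}$ together with $g_{\upharpoonright_W} \in U(L(W))$.
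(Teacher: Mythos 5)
Your proof is correct and takes essentially the same approach as the paper: restrict the identity $fgf = f$ and the unit $g$ to the invariant subspace $W$, obtaining $f_{\upharpoonright_W} g_{\upharpoonright_W} f_{\upharpoonright_W} = f_{\upharpoonright_W}$ with $g_{\upharpoonright_W}\in U(L(W))$. The paper's proof simply asserts these two facts without elaboration, while you spell out the verifications (multiplicativity of restriction, and that the inverse of $g$ also preserves $W$ and restricts to the inverse of $g_{\upharpoonright_W}$), which are exactly the details the paper leaves implicit.
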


\begin{proof}[\textbf{Proof}]
	If $f\in \ureg(\overline{L}(V, W))$, then $fgf = f$ for some $g\in U(\overline{L}(V, W))$. Therefore we get  $f_{\upharpoonright_W} g_{\upharpoonright_W} f_{\upharpoonright_W} = f_{\upharpoonright_W}$ and $g_{\upharpoonright_W}\in U(L(W))$. Hence $f_{\upharpoonright_W} \in \ureg(L(W))$.
\end{proof}


Note that $\overline{L}(V, U)$ contains the identity map on $V$. The following theorem describes unit-regular elements in $\overline{L}(V,W)$.

\begin{theorem}\label{char-unit-reg-lin-map}
	Let $f\in \overline{L}(V,W)$. Then $f \in \textnormal{ureg}(\overline{L}(V,W))$ if and only if
	\begin{enumerate}
		\item[\rm(i)] $R(f_{\upharpoonright_W})=W\cap R(f)$;
		\item[\rm(ii)] $\textnormal{nullity}(f_{\upharpoonright_W}) = \textnormal{corank}(f_{\upharpoonright_W})$;
		\item[\rm(iii)] $\textnormal{codim}_V(W+T_f)= \textnormal{codim}_V(W+R(f))$ for some subspace $T_f$ of $V$ such that $T_f$ and $W\cap T_f$ are transversals of $\ker(f)$ and $\ker(f_{\upharpoonright_W})$, respectively.
	\end{enumerate}
\end{theorem}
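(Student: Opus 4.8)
The plan is to follow the template of Theorem \ref{unit-reg-el-TXY}, using that a unit $g\in U(\overline{L}(V,W))$ is an automorphism of $V$ with $Wg=W$ and that $g_{\upharpoonright_W}$ is a unit of $L(W)$. For the forward direction I would fix $g\in U(\overline{L}(V,W))$ with $fgf=f$ and verify the three conditions in turn. Condition (i) is proved exactly as in the set case: $R(f_{\upharpoonright_W})\subseteq W\cap R(f)$ is clear, and for $w\in W\cap R(f)$, writing $w=vf$ gives $w=(vf)gf=(wg)f$ with $wg\in W$, so $w\in R(f_{\upharpoonright_W})$.

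For (ii) I would restrict $fgf=f$ to $W$, obtaining $f_{\upharpoonright_W}hf_{\upharpoonright_W}=f_{\upharpoonright_W}$ with $h\colonequals g_{\upharpoonright_W}\in U(L(W))$ (Lemma \ref{lin-res-unit}). Then $e\colonequals hf_{\upharpoonright_W}$ is idempotent, so $W=R(e)\oplus N(e)=R(f_{\upharpoonright_W})\oplus N(f_{\upharpoonright_W})h^{-1}$; since $h^{-1}$ carries $N(f_{\upharpoonright_W})$ isomorphically onto $N(e)$, the complement $N(e)$ of $R(f_{\upharpoonright_W})$ gives $\corank(f_{\upharpoonright_W})=\dim N(e)=\nul(f_{\upharpoonright_W})$. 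For (iii) I would take $T_f\colonequals R(fg)$, which is a subspace transversal of $\ker(f)$ by Lemma \ref{jaa22-range-trans}; using (i) together with injectivity of $g$ and $Wg=W$ one gets $W\cap T_f=(W\cap R(f))g=R(f_{\upharpoonright_W})g=R(f_{\upharpoonright_W}g_{\upharpoonright_W})$, a transversal of $\ker(f_{\upharpoonright_W})$. Finally $T_f=R(f)g$ and $Wg=W$ yield $(W+R(f))g=W+T_f$, so Lemma \ref{spa-iso-quo-iso} gives $V/(W+R(f))\approx V/(W+T_f)$ and hence $\codim_V(W+T_f)=\codim_V(W+R(f))$.

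The converse is the constructive heart of the argument, and the step I expect to be the main obstacle. Given (i)--(iii) with the subspace $T_f$ supplied by (iii), I would build a unit $g$ starting from the isomorphism $g_0\colonequals(f_{|_{T_f}})^{-1}\colon R(f)\to T_f$; any $g$ agreeing with $g_0$ on $R(f)$ already satisfies $fgf=f$, so the whole task is to extend $g_0$ to an automorphism of $V$ with $Wg=W$. To this end I would fix the compatible decompositions $V=(W\cap R(f))\oplus D_R\oplus D_W\oplus E$ and $V=T_f\oplus N(f)$, where $D_R$ completes $W\cap R(f)$ to $R(f)$, $D_W$ completes $W\cap R(f)$ to $W$, and $E$ completes $W+R(f)$ to $V$, and further write $N(f)=N(f_{\upharpoonright_W})\oplus N_R$ with $N(f_{\upharpoonright_W})=W\cap N(f)$. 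Condition (i) ensures that $g_0$ carries $W\cap R(f)$ onto $W\cap T_f$, so that defining $g$ to equal $g_0$ on $R(f)$, some isomorphism $D_W\to N(f_{\upharpoonright_W})$ on $D_W$, and some isomorphism $E\to N_R$ on $E$, yields a bijective linear map $g$ with $Wg=(W\cap T_f)\oplus N(f_{\upharpoonright_W})=W$, as required.

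The two extending isomorphisms exist exactly because of (ii) and (iii). Condition (ii) gives $\dim D_W=\corank(f_{\upharpoonright_W})=\nul(f_{\upharpoonright_W})=\dim N(f_{\upharpoonright_W})$, and the key translation of (iii) is the identity $W+T_f=T_f\oplus N(f_{\upharpoonright_W})$, which forces $\codim_V(W+T_f)=\dim\bigl(N(f)/N(f_{\upharpoonright_W})\bigr)=\dim N_R$; combined with (iii) this gives $\dim E=\codim_V(W+R(f))=\dim N_R$. The main difficulty is precisely this bookkeeping: matching the four domain summands with the four range summands, checking that $g$ is well defined and bijective, and confirming $Wg=W$. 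Once the dimension counts from (ii) and (iii) are in hand, the relation $fgf=f$ is automatic from $g=g_0$ on $R(f)$, and the construction closes.
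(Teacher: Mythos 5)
Your proposal is correct and follows essentially the same route as the paper's proof: the forward direction verifies (i)--(iii) with the same arguments (the idempotent decomposition of $W$ for (ii), and $T_f\colonequals R(fg)$ together with $(W+R(f))g=W+T_f$ and Lemma \ref{spa-iso-quo-iso} for (iii)), and the converse is the same construction, extending $g_0=(f_{|_{T_f}})^{-1}\colon R(f)\to T_f$ to a $W$-preserving automorphism by matching complements whose dimensions agree by (ii) and (iii). The only difference is presentational: you phrase the construction via direct-sum decompositions and isomorphisms between summands, inlining the content of Lemmas \ref{kerf-iso-kergf} and \ref{bases-n-r-tran}, where the paper builds explicit bases $B_1\cup B_2\cup B_3\cup B_4$ and $C_1\cup C_2\cup C_3\cup C_4$ and extends a bijection between them linearly.
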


\begin{proof}[\textbf{Proof}]
	Suppose that $f \in \ureg(\overline{L}(V,W))$. Then $fgf=f$ for some $g\in U(\overline{L}(V,W))$.
	
	\begin{enumerate}
		\item[\rm(i)] It is clear that $R(f_{\upharpoonright_W})\subseteq W\cap R(f)$, since $R(f_{\upharpoonright_W})\subseteq W$. For the reverse inclusion, let $w\in W\cap R(f)$. Then there exists $v\in V$ such that $vf = w$. Therefore, since $w \in W$, $f= fgf$, and
		$Wg \subseteq W$, we obtain
		\[w=vf=(vf)gf=(wg)f\in R(f_{\upharpoonright_W}).\]
		This implies $W\cap R(f) \subseteq R(f_{\upharpoonright_W})$ as required.

		\vspace{0.2mm}
		\item[\rm(ii)] By Lemma \ref{lin-res-unit}, we have $f_{\upharpoonright_W}\in \textnormal{ureg}(L(W))$. Therefore there exists $g_{\upharpoonright_W}\in U(L(W))$ such that $f_{\upharpoonright_W} g_{\upharpoonright_W} f_{\upharpoonright_W}=f_{\upharpoonright_W}$. Note that  $R(g_{\upharpoonright_W} f_{\upharpoonright_W})= R(f_{\upharpoonright_W}) $ and $g_{\upharpoonright_W} f_{\upharpoonright_W}$ is an idempotent of $L(W)$. Combining this with \cite[Theorem 2.22]{s-roman07}, we obtain $N(g_{\upharpoonright_W} f_{\upharpoonright_W})\oplus R(f_{\upharpoonright_W})=W$. Therefore $N(g_{\upharpoonright_W} f_{\upharpoonright_W}) \approx W/R(f_{\upharpoonright_W})$. Since $g_{\upharpoonright_W}$ is bijective, it follows from Lemma \ref{kerf-iso-kergf} that
		$N(f_{\upharpoonright_W}) \approx N(g_{\upharpoonright_W} f_{\upharpoonright_W})$ and subsequently $N(f_{\upharpoonright_W}) \approx W/R(f_{\upharpoonright_W})$. Hence
		$\nul(f_{\upharpoonright_W}) = \corank(f_{\upharpoonright_W})$.
		
		\vspace{0.2mm}
		\item[\rm(iii)] Note that $R(fg)$ and $R(f_{\upharpoonright_W} g_{\upharpoonright_W})$ are subspaces of $V$ and $W$, respectively. Let $T_f\colonequals R(fg)$ and $T_{(f_{\upharpoonright_W})}\colonequals R(f_{\upharpoonright_W} g_{\upharpoonright_W})$. Recall that $fgf=f$ and $f_{\upharpoonright_W} g_{\upharpoonright_W} f_{\upharpoonright_W}= f_{\upharpoonright_W}$. Therefore by Lemma \ref{jaa22-range-trans}, it follows that $T_f$ and $T_{(f_{\upharpoonright_W})}$ are transversals of $\ker(f)$ and $\ker(f_{\upharpoonright_W})$, respectively. Moreover, we establish that $T_{(f_{\upharpoonright_W})}=W\cap T_f$. Clearly $T_{(f_{\upharpoonright_W})}\subseteq W\cap T_f$. For the reverse inclusion, let $v\in W\cap T_f$. Then $u(fg)=v$ for some $u\in V$. Therefore, since $g$ is bijective, we get $uf\in W$. Hence by \rm(i), there exists $u'\in W$ such that $u'f=uf$. This implies $u'(fg)=u(fg)=v$, so $v\in T_{(f_{\upharpoonright_W})}$ as required. Also, since $g$ is bijective, we see that $(W+R(f))g=W+T_f$. Thus $V/(W+R(f))\approx V/(W+T_f)$ by Lemma \ref{spa-iso-quo-iso}. Hence $\codim_V(W+T_f)= \codim_V(W+R(f))$.
	\end{enumerate}

	Conversely, suppose that the given conditions hold for $f\in \overline{L}(V,W)$. To prove $f \in \ureg(\overline{L}(V,W))$, we will construct $g\in U(\overline{L}(V,W))$ such that $fgf = f$. For this, let $B_1$ be a basis for $R(f)\cap W$. Extend $B_1$ to bases $B_1 \cup B_2$ and $B_1 \cup B_3$ for $R(f)$ and $W$, respectively, where $B_2 \subseteq R(f)\setminus \langle B_1 \rangle$ and $B_3 \subseteq W\setminus \langle B_1 \rangle$. It is easy to see that $B_1\cup B_2 \cup B_3$ is a basis for $W+ R(f)$. Now, extend $B_1\cup B_2 \cup B_3$ to a basis $B \colonequals B_1\cup B_2 \cup B_3\cup B_4$ for $V$, where $B_4 \subseteq V\setminus (W+ R(f))$.
	
	\vspace{0.2mm}
	Note from (iii) that the subspace $T_f$ of $V$ is a transversal of $\ker(f)$. Therefore the corestriction of $f_{|_{T_f}}$ to $R(f)$ is an isomorphism. Denote by $g_0$ the inverse of this corestriction map. Clearly $g_0\colon R(f)\to T_f$ is an isomorphism. Set $B_1g_0\colonequals C_1$ and $B_2g_0\colonequals C_2$.  It is evident that $C_1\cup C_2$ is a basis for $T_f$, since $B_1 \cup B_2$ is a basis for $R(f)$. Next, note from \rm(iii) that the subspace $T_{(f_{\upharpoonright_W})}\colonequals W\cap T_f$ of $W$ is a transversal of $\ker(f_{\upharpoonright_W})$. Therefore the corestriction of ${(f_{\upharpoonright_W})}_{|_{T_{(f_{\upharpoonright_W})}}}$ to $R(f_{\upharpoonright_W})$ is an isomorphism. Notice that the inverse of this isomorphism agrees with $g_0$ on $R(f_{\upharpoonright_W})$. Recall that $B_1$ is a basis for $W\cap R(f)$. Therefore $C_1$ is a basis for $T_{(f_{\upharpoonright_W})}$, since $R(f_{\upharpoonright_W})=W\cap R(f)$ by \rm(i).
	
	\vspace{0.5mm}
	Let $C_3$ be a basis for $N(f_{\upharpoonright_W})$. Then $C_1\cup C_3$ is a basis for $W$ by Lemma \ref{bases-n-r-tran}, and subsequently $C_1\cup C_2\cup C_3$ is a basis for $W+T_f$. Extend $C_1\cup C_2\cup C_3$ to a basis $C\colonequals C_1\cup C_2\cup C_3\cup C_4$ for $V$, where $C_4\subseteq V\setminus (W+T_f)$. Recall that $B_1$ and $B_1\cup B_3$ are bases for
	$W\cap R(f)$ and $W$, respectively. Since $R(f_{\upharpoonright_W}) = W \cap R(f)$ by (i), it follows that $\corank(f_{\upharpoonright_W})=|B_3|$. Therefore $|C_3|= \nul(f_{\upharpoonright_W}) = |B_3|$ by (ii). Thus there exists a bijection $\alpha \colon B_3\to C_3$. Next, recall that $B$ and $C$ are bases for $V$ such that $B_1\cup B_2\cup B_3 \subseteq B$ and $C_1\cup C_2\cup C_3 \subseteq C$. Since $B_1\cup B_2\cup B_3$ is a basis for $W+R(f)$ and $C_1\cup C_2\cup C_3$ is basis for $W+T_f$, we get $\codim_V(W+R(f))=|B_4|$ and $\codim_V(W+T_f)=|C_4|$. Therefore $|B_4|=|C_4|$ by (iii). Thus there exists a bijection $\beta \colon B_4\to C_4$.
	
	\vspace{0.2mm}
	Now, define a map $g\colon B \to C$ by setting
	\begin{align*}
		vg=
		\begin{cases}
			vg_0 & \text{if $v\in B_1\cup B_2$}\\
			v\alpha & \text{if $v\in B_3$}\\
			v\beta & \text{if $v\in B_4$}.
		\end{cases}
	\end{align*}
	It is easy to check that $g$ is bijective. Therefore the unique linear map $\bar{g}\colon V \to V$ is bijective. Moreover, we obtain 
	$W\bar{g}=\langle B_1\cup B_3\rangle \bar{g}=\langle B_1g_0\cup B_3\alpha \rangle =\langle C_1\cup C_3\rangle\subseteq W$. Thus
	$\bar{g}\in U(\overline{L}(V,W))$. We can also verify in a routine manner that $f\bar{g}f=f$. Hence $f \in \ureg(\overline{L}(V,W))$.
\end{proof}


If $W = V$, then conditions (i) and (iii) of Theorem \ref{char-unit-reg-lin-map} trivially hold. The following is an immediate corollary of Theorem \ref{char-unit-reg-lin-map} for the case when $W = V$, which describes unit-regular elements in $L(V)$.

\begin{corollary}\label{nul-corank-unit-reg-ele}
	Let $f\in L(V)$. Then $f \in \textnormal{ureg}(L(V))$ if and only if $\textnormal{nullity}(f)= \textnormal{corank}(f)$.
\end{corollary}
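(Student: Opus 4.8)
The plan is to derive this as the specialization $W = V$ of Theorem~\ref{char-unit-reg-lin-map}. When $W = V$, the invariance condition $Wf \subseteq W$ is automatic, so $\overline{L}(V,V) = L(V)$, and the corestriction $f_{\upharpoonright_W}$ is simply $f$ itself. First I would observe that conditions (i) and (iii) of Theorem~\ref{char-unit-reg-lin-map} become vacuous: condition (i) reads $R(f) = V \cap R(f) = R(f)$, which always holds; and for (iii), since $W = V$ we have $W + T_f = V = W + R(f)$ regardless of the choice of transversal $T_f$, so $\codim_V(W+T_f) = 0 = \codim_V(W+R(f))$ holds trivially. Thus the only surviving condition is (ii), namely $\nul(f_{\upharpoonright_W}) = \corank(f_{\upharpoonright_W})$, which with $f_{\upharpoonright_W} = f$ reads exactly $\nul(f) = \corank(f)$.

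The one point requiring a small check is that Theorem~\ref{char-unit-reg-lin-map} is stated for the semigroup $\overline{L}(V,W)$ with its unit group $U(\overline{L}(V,W))$, whereas the corollary speaks of $\ureg(L(V))$. Taking $W = V$ identifies $\overline{L}(V,W)$ with $L(V)$ as semigroups, and correspondingly $U(\overline{L}(V,V)) = U(L(V))$ since every bijective linear map trivially preserves the subspace $V$. Hence $\ureg(\overline{L}(V,V)) = \ureg(L(V))$, and the equivalence transfers verbatim.

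The proof itself is therefore essentially a verification that the extraneous conditions collapse, and I would present it in two or three sentences: set $W = V$, note $f_{\upharpoonright_W} = f$, observe that (i) and (iii) hold automatically while (ii) reduces to the stated criterion, and invoke Theorem~\ref{char-unit-reg-lin-map}. There is no real obstacle here; the content lies entirely in the preceding theorem, and the main thing to be careful about is simply confirming that none of the degenerate conditions accidentally fail or impose a hidden constraint when $W = V$.

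\begin{proof}[\textbf{Proof}]
	Taking $W = V$, we have $\overline{L}(V,W) = L(V)$ and $f_{\upharpoonright_W} = f$. Condition (i) of Theorem \ref{char-unit-reg-lin-map} becomes $R(f) = V \cap R(f)$, which always holds. For condition (iii), any transversal $T_f$ of $\ker(f)$ satisfies $W + T_f = V = W + R(f)$, so $\codim_V(W+T_f) = 0 = \codim_V(W+R(f))$ holds trivially. Thus the only remaining condition is (ii), which reads $\nul(f) = \corank(f)$. Hence by Theorem \ref{char-unit-reg-lin-map}, $f \in \ureg(L(V))$ if and only if $\nul(f) = \corank(f)$.
\end{proof}
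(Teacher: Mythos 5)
Your proposal is correct and takes essentially the same approach as the paper: the paper likewise presents this corollary as an immediate specialization of Theorem~\ref{char-unit-reg-lin-map} to $W = V$, observing that conditions (i) and (iii) hold trivially in that case. The only micro-detail worth making explicit is that condition (iii) also asks for the \emph{existence} of a subspace transversal $T_f$ of $\ker(f)$, which is immediate since any complement of $N(f)$ in $V$ is one (or by Lemma~\ref{subspace-transversal} with $W = V$), after which both codimensions are $0$ as you say.
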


We need the following lemma to give an alternative proof of Theorem \ref{unit-reg-fin-dim}, which may also be obtained by combining  Proposition \ref{alarsf80-unit-facto} and Theorem \ref{facto-fin-dim}.

\begin{lemma}\label{nul-corank-fin-dim}
	We have	$\textnormal{nullity}(f)=\textnormal{corank}(f)$ for all $f\in L(V)$ if and only if $V$ is finite-dimensional.
\end{lemma}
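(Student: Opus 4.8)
The plan is to prove the forward implication directly from the rank--nullity theorem and the converse by contraposition, exhibiting a single explicit operator whenever $V$ is infinite-dimensional.

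First I would handle the finite-dimensional case. Suppose $V$ is finite-dimensional and let $f\in L(V)$. Then $\rank(f)+\nul(f)=\dim(V)$, and since $\dim(V/R(f))=\dim(V)-\dim(R(f))$ we also have $\rank(f)+\corank(f)=\dim(V)$. Subtracting these two identities yields $\nul(f)=\corank(f)$ at once, so the condition holds for every $f\in L(V)$.

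For the converse I would argue contrapositively: assuming $V$ is infinite-dimensional, I would construct one $f\in L(V)$ with $\nul(f)\neq\corank(f)$. Fix a basis $B$ for $V$; since $\dim(V)\ge\aleph_0$, I may choose a countably infinite subset $\{b_1,b_2,b_3,\ldots\}\subseteq B$. Define a map $g\colon B\to B$ by $b_ig=b_{i+1}$ for each $i\in\mathbb{N}$ and $bg=b$ for every $b\in B\setminus\{b_1,b_2,\ldots\}$, and let $\bar{g}\in L(V)$ be its linear extension. Because $g$ sends $B$ injectively into $B$, the image $Bg$ is linearly independent, so $\bar{g}$ is a monomorphism; hence $N(\bar{g})=\{0\}$ and $\nul(\bar{g})=0$. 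On the other hand $R(\bar{g})=\langle Bg\rangle=\langle B\setminus\{b_1\}\rangle$ is a proper subspace of $V$ with $b_1\notin R(\bar{g})$, so $V/R(\bar{g})$ is nonzero and $\corank(\bar{g})\ge 1$. Thus $\nul(\bar{g})=0\neq\corank(\bar{g})$, which contradicts the hypothesis that every element of $L(V)$ satisfies $\nul(f)=\corank(f)$. Therefore $V$ must be finite-dimensional.

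The only point demanding care -- the \emph{main obstacle}, such as it is -- is confirming that the shift $g$ maps $B$ injectively \emph{onto} $B\setminus\{b_1\}$, so that $\bar{g}$ comes out injective but not surjective. This rests on the indices $i+1$ being pairwise distinct and the fixed part $B\setminus\{b_1,b_2,\ldots\}$ remaining disjoint from the shifted tail $\{b_2,b_3,\ldots\}$, both of which are immediate from the construction. Everything else is routine dimension bookkeeping.
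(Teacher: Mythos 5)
Your proof is correct, and its skeleton matches the paper's: the rank--nullity theorem disposes of the finite-dimensional direction, and the converse is proved contrapositively by exhibiting an injective, non-surjective element of $L(V)$. The difference lies in how that witness is constructed. The paper takes a nonzero $v$, writes $V=\langle v\rangle\oplus U$, invokes the fact that $\dim(U)=\dim(V)$ when $V$ is infinite-dimensional (citing \cite[Theorem 1.14]{s-roman07}) to obtain an isomorphism $\varphi\colon V\to U$, and then views $\varphi$ as an element $\tilde{\varphi}\in L(V)$ with $\nul(\tilde{\varphi})=0$ and $\corank(\tilde{\varphi})=1$. You instead extract a countably infinite subset $\{b_1,b_2,\ldots\}$ from a basis $B$ and linearly extend the unilateral shift $b_i\mapsto b_{i+1}$ (identity on the rest of $B$); this is injective because it maps $B$ bijectively onto the linearly independent set $B\setminus\{b_1\}$, and its range misses $b_1$, so the corank is at least $1$. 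Your construction is more explicit and slightly more elementary: it avoids the cardinal-arithmetic dimension theorem entirely, needing only that an infinite set contains a countably infinite subset. The paper's construction is shorter at the cost of citing that theorem, and it pins the corank down to exactly $1$; both witnesses do the same job, and the finite-dimensional halves of the two proofs are identical.
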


\begin{proof}[\textbf{Proof}]
	We prove the necessity part of the lemma by contraposition. Suppose that $V$ is infinite-dimensional. Let $v$ be a nonzero vector of $V$. Clearly $\dim(\langle v\rangle) = 1$. Note that $V=\langle v\rangle \oplus U$ for some complement $U$ of $\langle v\rangle$ in $V$. Then by \cite[Theorem 1.14]{s-roman07}, we get $\dim(V)=\dim(U)$. Therefore there is an isomorphism $\varphi \colon V\to U$. Since $U$ is a proper subspace of $V$, it follows that there is an injective linear map $\tilde{\varphi}\in L(V)$ that agrees with $\varphi$ on $V$. Thus $\nul(\tilde{\varphi})=0$. It is also immediate that $\corank(\tilde{\varphi})=1$. Hence $\textnormal{nullity}(\tilde{\varphi})\neq\textnormal{corank}(\tilde{\varphi})$.
	
	\vspace{0.2mm}
	Conversely, suppose that $V$ is finite-dimensional, and let $f\in L(V)$. Then $\corank(f)=\dim(V)- \rank(f)$. Therefore by rank-nullity theorem, we obtain $\nul(f)=\dim(V)-\rank(f)=\corank(f)$ as required.
\end{proof}


The following known theorem is straightforward from Corollary \ref{nul-corank-unit-reg-ele} and Lemma \ref{nul-corank-fin-dim}.

\begin{theorem}\label{unit-reg-fin-dim}
	The semigroup $L(V)$ is unit-regular if and only if $V$ is finite-dimensional.
\end{theorem}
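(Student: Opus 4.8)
The plan is to obtain the theorem as an immediate consequence of the two preceding results, since by definition $L(V)$ is unit-regular exactly when every one of its elements is unit-regular. Thus the whole argument is a bookkeeping step that passes a quantifier through the element-level characterization. First I would unwind the definition: $L(V)$ is a unit-regular semigroup if and only if $f\in\ureg(L(V))$ for every $f\in L(V)$.

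Next I would apply Corollary \ref{nul-corank-unit-reg-ele} pointwise. For each fixed $f\in L(V)$, that corollary gives the equivalence $f\in\ureg(L(V))$ if and only if $\nul(f)=\corank(f)$. Quantifying this equivalence over all $f\in L(V)$ shows that $L(V)$ is unit-regular if and only if $\nul(f)=\corank(f)$ holds for \emph{every} $f\in L(V)$.

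Finally I would invoke Lemma \ref{nul-corank-fin-dim}, which states precisely that this universally quantified equality is equivalent to $V$ being finite-dimensional. Chaining the two equivalences yields that $L(V)$ is unit-regular if and only if $V$ is finite-dimensional, as desired.

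There is essentially no genuine obstacle at this stage: the substantive work has already been carried out in the two earlier statements. The nontrivial direction of the element-level criterion (the explicit construction of a unit $\bar g$ with $f\bar g f=f$ from the equality $\nul(f)=\corank(f)$) is handled in Theorem \ref{char-unit-reg-lin-map} and its corollary, while the global dimension dichotomy (exhibiting an injective but non-surjective operator when $\dim(V)$ is infinite, and using the rank-nullity theorem in the finite-dimensional case) is handled in Lemma \ref{nul-corank-fin-dim}. Consequently the only point requiring any care here is the clean interchange of ``every element satisfies a property'' with the global statement, which is exactly what the quantified form of Corollary \ref{nul-corank-unit-reg-ele} supplies.
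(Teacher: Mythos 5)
Your proposal is correct and follows exactly the paper's route: the paper states this theorem as ``straightforward from Corollary \ref{nul-corank-unit-reg-ele} and Lemma \ref{nul-corank-fin-dim}'', which is precisely your chain of quantifying the element-level criterion $\nul(f)=\corank(f)$ over all $f\in L(V)$ and then applying the dimension dichotomy. Nothing is missing; the interchange of quantifiers you flag as the only delicate point is indeed the entire content of the deduction.
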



Next, we need the following lemma to prove Theorem \ref{L-V-W=unit-reg}

\begin{lemma}\label{int-imp-triv}
	We have	$R(f_{\upharpoonright_W})=W\cap R(f)$ for all $f\in \overline{L}(V,W)$ if and only if $W$ is trivial.
\end{lemma}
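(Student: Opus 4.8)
The plan is to prove both directions, taking the substantive one by contraposition. The statement is the linear analogue of Lemma \ref{Y-singletn-whole}, so I would follow that argument, replacing its ``collapse $Y$ to a single point'' device by a ``collapse $W$ to $\{0\}$'' device adapted to subspaces.

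For sufficiency, suppose $W$ is trivial. If $W=\{0\}$, then $R(f_{\upharpoonright_W})=\{0\}=\{0\}\cap R(f)=W\cap R(f)$ for every $f\in\overline{L}(V,W)$. If $W=V$, then $\overline{L}(V,W)=L(V)$ and $f_{\upharpoonright_W}=f$, so $R(f_{\upharpoonright_W})=R(f)=V\cap R(f)=W\cap R(f)$. Both cases are immediate.

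For necessity I would argue contrapositively: assuming $W$ is nontrivial, so $\{0\}\neq W$ and $W\neq V$, I exhibit a single $f\in\overline{L}(V,W)$ violating the equality. Fix a nonzero $w_0\in W$, choose a basis $B_W$ of $W$ with $w_0\in B_W$, and extend $B_W$ to a basis $B$ of $V$; since $W\neq V$ there is some $v_0\in B\setminus B_W$, and any such $v_0$ lies outside $W$. Define $g\colon B\to V$ by $v_0 g=w_0$ and $vg=0$ for every other $v\in B$ (in particular for all of $B_W$), and put $f\colonequals\bar{g}$. Because $B_W$ is annihilated, $Wf=\{0\}\subseteq W$, so $f\in\overline{L}(V,W)$ and $R(f_{\upharpoonright_W})=\{0\}$; on the other hand $R(f)=\langle w_0\rangle$, whence $W\cap R(f)=\langle w_0\rangle\neq\{0\}$. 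This contradicts the hypothesis and completes the proof.

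Two features distinguish this from Lemma \ref{Y-singletn-whole}. First, no separate treatment of low-dimensional $V$ is needed: nontriviality of $W$ already forces $\dim(V)\ge 2$, and a basis is available in every case. Second, where the set argument needed $|Y|\ge 2$ to supply two distinct points, here a single nonzero $w_0$ suffices, since the ``trivial target'' $\{0\}$ is a proper subspace of any nonzero $W$. The only step requiring care is checking $f\in\overline{L}(V,W)$, which is immediate once one notices that the witness vector $w_0\in W\cap R(f)$ is produced from $v_0$ outside $W$ while $W$ itself is sent to $0$; this asymmetry, enabled precisely by the nontriviality of $W$, is the heart of the matter.
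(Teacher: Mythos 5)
Your proof is correct and follows essentially the same route as the paper: the paper also argues the necessity by contraposition, building a linear map that annihilates a basis of $W$ and sends a single basis vector outside $W$ to a nonzero vector $w\in W$, so that $R(f_{\upharpoonright_W})=\{0\}$ while $W\cap R(f)=\langle w\rangle$. The only cosmetic difference is that the paper phrases the construction via a complement $U$ with $V=W\oplus U$ rather than by extending a basis of $W$ to a basis of $V$, which amounts to the same thing.
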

\begin{proof}[\textbf{Proof}]
	We prove the necessity part of the lemma by contraposition. Suppose that $W$ is nontrivial. Then there exists a nontrivial subspace $U$ of $V$ such that $V=W\oplus U$. Let $B$ and $C$ be bases for $W$ and $U$, respectively. Then $B\cup C$ is a basis for $V$. Fix $w \in B$ and $u\in C$. Define a map $f\colon B \cup C \to V$ by
	\begin{equation*}
		vf=
		\begin{cases}
			w  & \text{if $v=u$}\\
			0 & \text{if $v\in B\cup (C\setminus \{u\})$}.
		\end{cases}
	\end{equation*}
	It is easy to see that $B\bar{f} = Bf=\{0\}$. Therefore $W\bar{f}=\{0\}\subseteq W$, so $\bar{f}\in \overline{L}(V,W)$. We further see that $W\cap R(\bar{f}) = \langle w\rangle$. Therefore $R(\bar{f}_{\upharpoonright_W})\neq W\cap R(\bar{f})$.
	
	\vspace{0.5mm}
	The proof of the converse part is immediate.
\end{proof}


We can now obtain a new proof of the following known theorem (cf. \cite[Theorem 11]{chaiya-s19}).
\begin{theorem}\label{L-V-W=unit-reg}
	The semigroup $\overline{L}(V,W)$ is unit-regular if and only if
	\begin{enumerate}
		\item[\rm(i)] $W$ is trivial;
		\item[\rm(ii)] $V$ is finite-dimensional.
	\end{enumerate}
\end{theorem}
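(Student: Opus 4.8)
The plan is to derive Theorem \ref{L-V-W=unit-reg} from the characterization of unit-regular elements already established in Theorem \ref{char-unit-reg-lin-map}, together with Lemma \ref{int-imp-triv}, Corollary \ref{nul-corank-unit-reg-ele}, and Lemma \ref{nul-corank-fin-dim}. The argument splits into the two directions of the biconditional, and each is handled by reducing statements about the whole semigroup to statements about individual elements.

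\medskip

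\noindent\textbf{Necessity.} First I would assume that $\overline{L}(V,W)$ is unit-regular, so every $f\in\overline{L}(V,W)$ lies in $\ureg(\overline{L}(V,W))$. For (i), I observe that Theorem \ref{char-unit-reg-lin-map}(i) must then hold for every such $f$; that is, $R(f_{\upharpoonright_W})=W\cap R(f)$ for all $f\in\overline{L}(V,W)$. Lemma \ref{int-imp-triv} immediately forces $W$ to be trivial. For (ii), I would use the trivialness of $W$ just established. The key point is that when $W$ is trivial, $\overline{L}(V,W)$ either equals $L(V)$ (the case $W=\{0\}$, since every map fixes $\{0\}$) or equals $L(V)$ again (the case $W=V$, since every $f$ automatically satisfies $Vf\subseteq V$). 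In both cases $\overline{L}(V,W)=L(V)$, so $L(V)$ is unit-regular, and Theorem \ref{unit-reg-fin-dim} yields that $V$ is finite-dimensional.

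\medskip

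\noindent\textbf{Sufficiency.} Conversely, I would assume (i) and (ii) and show $\overline{L}(V,W)$ is unit-regular by verifying that an arbitrary $f\in\overline{L}(V,W)$ satisfies all three conditions of Theorem \ref{char-unit-reg-lin-map}. Since $W$ is trivial, $\overline{L}(V,W)=L(V)$, so it suffices to check that every $f\in L(V)$ is unit-regular; by Corollary \ref{nul-corank-unit-reg-ele} this amounts to $\nul(f)=\corank(f)$, which holds for all $f\in L(V)$ precisely because $V$ is finite-dimensional, by Lemma \ref{nul-corank-fin-dim}. Alternatively, one can bypass the reduction to $L(V)$ and verify conditions (i)--(iii) of Theorem \ref{char-unit-reg-lin-map} directly: with $W$ trivial, condition (i) holds trivially (as noted in the remark preceding Corollary \ref{nul-corank-unit-reg-ele} for $W=V$, and trivially for $W=\{0\}$), condition (ii) follows from the finite-dimensionality of $W$ via the rank-nullity argument in Lemma \ref{nul-corank-fin-dim}, and condition (iii) follows from the finite-dimensionality of $V$.

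\medskip

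\noindent The main obstacle, such as it is, lies in the necessity direction for (ii): one must be careful that the reduction ``$W$ trivial implies $\overline{L}(V,W)=L(V)$'' genuinely covers both trivial subspaces $\{0\}$ and $V$, and that invoking Theorem \ref{unit-reg-fin-dim} (itself a consequence of Corollary \ref{nul-corank-unit-reg-ele} and Lemma \ref{nul-corank-fin-dim}) is legitimate rather than circular. Since Theorem \ref{unit-reg-fin-dim} was proved independently above, there is no circularity, and the remaining steps are routine bookkeeping built entirely on the element-wise characterization.
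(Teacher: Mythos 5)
Your proposal is correct and takes essentially the same approach as the paper: the necessity direction is identical (Theorem \ref{char-unit-reg-lin-map}(i) together with Lemma \ref{int-imp-triv} forces $W$ to be trivial, whence $\overline{L}(V,W)=L(V)$ and Theorem \ref{unit-reg-fin-dim} gives finite-dimensionality), and the sufficiency direction rests on the same reduction $\overline{L}(V,W)=L(V)$. The only difference is cosmetic: after that reduction the paper cites factorizability (Theorem \ref{facto-fin-dim} plus Proposition \ref{alarsf80-unit-facto}) while you cite Corollary \ref{nul-corank-unit-reg-ele} with Lemma \ref{nul-corank-fin-dim}, which are interchangeable here since they are exactly the ingredients of Theorem \ref{unit-reg-fin-dim}.
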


\begin{proof}[\textbf{Proof}]
	Suppose that $\overline{L}(V,W)$ is unit-regular.
	\begin{enumerate}
		\item[\rm(i)] By Theorem \ref{char-unit-reg-lin-map}, we have $R(f_{\upharpoonright_W})=W\cap R(f)$ for all $f\in \overline{L}(V,W)$. Therefore $W$ is trivial by Lemma \ref{int-imp-triv}.
		
		\vspace{0.5mm}
		\item[\rm(ii)] From (i), it is clear that $\overline{L}(V,W) = L(V)$. Then by assumption, the semigroup $L(V)$ is unit-regular. Therefore $V$ is finite-dimensional by Theorem \ref{unit-reg-fin-dim}.
	\end{enumerate}
	
	Conversely, suppose that the given conditions hold. Then by (i), we have $\overline{L}(V,W) = L(V)$. Therefore by combining (ii) and Theorem \ref{facto-fin-dim}, the semigroup $\overline{L}(V,W)$ is factorizable. Hence $\overline{L}(V,W)$ is unit-regular by Proposition \ref{alarsf80-unit-facto}.
\end{proof}

\section{Semi-Balanced Semigroups}
Recall that an element $f$ of $T(X)$ is semi-balanced if $\textnormal{c}(f) =\textnormal{d}(f)$; a subsemigroup of $T(X)$ is semi-balanced if all its elements are semi-balanced. In this section, we give necessary and sufficient conditions for the semigroups $\overline{T}(X, Y)$,  $L(V)$, and $\overline{L}(V, W)$ to be semi-balanced. We begin with the following trivial remark.

\begin{remark}\label{subsemigrp=semi-balanced}
	Every subsemigroup of a semi-balanced semigroup is semi-balanced.
\end{remark}	


The following theorem is immediate from Proposition \ref{alar-sf80} and Lemma \ref{transform-semibal-Xfinite}. 


\begin{theorem}\label{FTS-semibal-finite} 
	The following statements are equivalent.
	\begin{enumerate}
		\item[\rm(i)] The semigroup $T(X)$ is unit-regular.
		\item[\rm(ii)] The semigroup $T(X)$ is semi-balanced.
		\item[\rm(iii)] The set $X$ is finite.	
	\end{enumerate}
\end{theorem}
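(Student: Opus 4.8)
The plan is to prove the theorem by establishing the two equivalences (i) $\Leftrightarrow$ (iii) and (ii) $\Leftrightarrow$ (iii) separately; chaining these through the common statement (iii) immediately yields the full cycle of equivalences among (i), (ii), and (iii). Both equivalences are essentially restatements of results already available to us, so the proof reduces to unwinding the relevant definition and citing these results.

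First I would record the equivalence (i) $\Leftrightarrow$ (iii). This is precisely the content of Proposition \ref{alar-sf80}, which asserts that $T(X)$ is unit-regular exactly when $X$ is finite, so no further argument is needed. Next I would handle (ii) $\Leftrightarrow$ (iii). Here the only point requiring care is that "$T(X)$ is semi-balanced" is, by the definition of a semi-balanced semigroup, the same as "every element of $T(X)$ is semi-balanced." Once this is observed, Lemma \ref{transform-semibal-Xfinite} gives that every element of $T(X)$ is semi-balanced if and only if $X$ is finite, which is exactly (ii) $\Leftrightarrow$ (iii). Combining the two equivalences then gives (i) $\Leftrightarrow$ (ii) $\Leftrightarrow$ (iii).

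There is no genuine obstacle in this argument; the theorem is a direct corollary of the two cited results, and the sole nontrivial move is unpacking the definition of "semi-balanced semigroup." I note in passing that one could instead obtain (i) $\Leftrightarrow$ (ii) directly from Theorem \ref{ureg-smbal-equi}, which equates unit-regularity and semi-balancedness at the level of individual elements (so that $T(X)$ is unit-regular iff every element is unit-regular iff every element is semi-balanced iff $T(X)$ is semi-balanced), and then invoke either Proposition \ref{alar-sf80} or Lemma \ref{transform-semibal-Xfinite} to link to the finiteness condition (iii). I would present the first route, since it most transparently exhibits the theorem as the merger of the two previously cited characterizations.
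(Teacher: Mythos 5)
Your proposal is correct and matches the paper's approach exactly: the paper also derives this theorem immediately from Proposition \ref{alar-sf80} (giving (i) $\Leftrightarrow$ (iii)) and Lemma \ref{transform-semibal-Xfinite} (giving (ii) $\Leftrightarrow$ (iii)). Your additional remark about using Theorem \ref{ureg-smbal-equi} is a valid alternative route but is not needed.
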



Using Theorem \ref{ureg-smbal-equi}, we prove the next proposition.

\begin{proposition}\label{unit-semibal}
	If $M$ is a submonoid of $T(X)$, then $\textnormal{ureg}(M) \subseteq B(M)$.
\end{proposition}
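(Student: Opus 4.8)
The plan is to reduce unit-regularity in the submonoid $M$ to unit-regularity in the ambient semigroup $T(X)$, and then to invoke Theorem~\ref{ureg-smbal-equi}. The crucial observation is that the identity of $M$ must coincide with the identity map on $X$, precisely because $M$ is a \emph{submonoid} of $T(X)$. Consequently, every unit of $M$ is invertible with respect to the identity map on $X$ and hence is a permutation of $X$; in other words, $U(M)\subseteq U(T(X))$.

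Granting this observation, I would argue as follows. Let $f\in \ureg(M)$. Then $fgf=f$ for some $g\in U(M)$. By the observation above, $g\in U(T(X))$, so the very same relation $fgf=f$ witnesses that $f\in \ureg(T(X))$. Theorem~\ref{ureg-smbal-equi} then shows that $f$ is semi-balanced, that is, $f\in B$. Since $f\in M$, we conclude $f\in M\cap B = B(M)$. As $f\in \ureg(M)$ was arbitrary, this gives $\ureg(M)\subseteq B(M)$.

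The only step requiring care is the inclusion $U(M)\subseteq U(T(X))$, i.e., confirming that the identity of $M$ is the identity transformation on $X$ rather than some other idempotent of $T(X)$. This is exactly what the hypothesis \emph{submonoid} guarantees: if $g\in U(M)$ with two-sided inverse $g^{-1}\in M$, then $gg^{-1}=g^{-1}g$ equals the identity map on $X$, which forces $g$ to be a bijection and hence a unit of $T(X)$. So the main (and essentially the only) obstacle is making this identification explicit; once it is in place, the result follows directly from Theorem~\ref{ureg-smbal-equi}.
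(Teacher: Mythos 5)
Your proposal is correct and follows essentially the same route as the paper: the paper's proof also deduces $f\in \textnormal{ureg}(T(X))$ from $f\in \textnormal{ureg}(M)$ ``since $M$ is a submonoid of $T(X)$'' and then applies Theorem~\ref{ureg-smbal-equi} to conclude $f\in M\cap B=B(M)$. The only difference is that you make explicit the inclusion $U(M)\subseteq U(T(X))$ (via the identity of $M$ being the identity map on $X$), which the paper leaves implicit.
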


\begin{proof}[\textbf{Proof}]
	Let $f\in \textnormal{ureg}(M)$. Then $f\in \textnormal{ureg}(T(X))$, since $M$ is a submonoid of $T(X)$. Therefore $f\in B$ by Theorem \ref{ureg-smbal-equi}. Hence $f\in M \cap B = B(M)$ as required.	
\end{proof}


The following is an immediate corollary of Proposition \ref{unit-semibal}.

\begin{corollary}\label{unit-reg--semi-balanced}
	Every unit-regular submonoid of $T(X)$ is semi-balanced.
\end{corollary}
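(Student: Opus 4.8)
The plan is to unwind the two definitions involved and then invoke Proposition \ref{unit-semibal} directly. By a \emph{unit-regular submonoid} $M$ of $T(X)$ we mean a submonoid in which every element is unit-regular; in the notation of the excerpt this says precisely that $\ureg(M) = M$. On the other side, $M$ being \emph{semi-balanced} means that every element of $M$ is semi-balanced, i.e. $M \subseteq B$, which (since $M \subseteq T(X)$) is the same as $M = M \cap B = B(M)$. So the goal is exactly to establish the equality $M = B(M)$.

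With these reformulations in place the argument is immediate. First I would apply Proposition \ref{unit-semibal} to the submonoid $M$, which gives the inclusion $\ureg(M) \subseteq B(M)$. Combining this with the hypothesis $M = \ureg(M)$ yields $M = \ureg(M) \subseteq B(M)$. Since the reverse inclusion $B(M) = M \cap B \subseteq M$ is automatic, I conclude $M = B(M)$, i.e. $M$ is semi-balanced. This closes the proof.

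There is essentially no obstacle here, as the result is a specialization of Proposition \ref{unit-semibal} to the case $\ureg(M) = M$; the only thing worth double-checking is the definitional step identifying "every element of $M$ is unit-regular" with "$\ureg(M) = M$," together with the fact (already used inside the proof of Proposition \ref{unit-semibal}) that a unit of the submonoid $M$ is a unit of $T(X)$, so that unit-regularity computed in $M$ entails unit-regularity in $T(X)$ and hence, via Theorem \ref{ureg-smbal-equi}, semi-balancedness.
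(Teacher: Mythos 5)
Your proof is correct and takes essentially the same route as the paper, which states this corollary as an immediate consequence of Proposition \ref{unit-semibal} with no further argument. Your explicit unwinding of the definitions (a unit-regular submonoid satisfies $M = \textnormal{ureg}(M)$, and semi-balanced means $M = B(M) = M \cap B$) simply makes that one-line deduction precise.
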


The converse of Corollary \ref{unit-reg--semi-balanced} is not necessarily true as seen from the following example.

\begin{example}
	Let $X$ be a finite set such that $|X|\ge 3$, and let $Y$ be a proper subset of $X$ such that $|Y|\ge 2$. It is clear that $\overline{T}(X, Y)$ is semi-balanced, since $X$ is finite. However, the semigroup $\overline{T}(X, Y)$ is not regular by \cite[Proposition 2.1(ii)]{nenth06}. Therefore $\overline{T}(X, Y)$ is not unit-regular.
\end{example}


In the next theorem, we give a necessary and sufficient condition for the semigroup $\overline{T}(X, Y)$ to be semi-balanced.

\begin{theorem}\label{T-XY-semibal-finite}
	The semigroup $\overline{T}(X,Y)$ is semi-balanced if and only if $X$ is finite.
\end{theorem}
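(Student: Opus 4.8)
The plan is to prove both directions, with all the genuine content residing in the \emph{only if} part, which I would establish by contraposition. For the \emph{if} direction, suppose $X$ is finite. By Lemma \ref{transform-semibal-Xfinite} (equivalently Theorem \ref{FTS-semibal-finite}), the semigroup $T(X)$ is semi-balanced, and since $\overline{T}(X,Y)$ is a subsemigroup of $T(X)$, Remark \ref{subsemigrp=semi-balanced} immediately yields that $\overline{T}(X,Y)$ is semi-balanced.

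For the converse, I would argue contrapositively: assuming $X$ is infinite, I would exhibit an element $f\in \overline{T}(X,Y)$ that is not semi-balanced. The guiding observation is that any injective but non-surjective transformation $f$ satisfies $\textnormal{c}(f)=0$ while $\textnormal{d}(f)\geq 1$, and is therefore not semi-balanced; so it suffices to produce such an $f$ respecting the constraint $Yf\subseteq Y$. Since $X=Y\cup(X\setminus Y)$ is infinite, at least one of $Y$ and $X\setminus Y$ is infinite, and I would split into these two cases.

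If $X\setminus Y$ is infinite, I would reuse the construction already appearing in the proof of Theorem \ref{unit-reg-TXY}: choose an injective, non-surjective map $\alpha\colon X\setminus Y\to X\setminus Y$ and define $f$ to be the identity on $Y$ and $\alpha$ on $X\setminus Y$. If instead $Y$ is infinite, I would argue symmetrically by choosing an injective, non-surjective $\alpha\colon Y\to Y$ and defining $f$ to be $\alpha$ on $Y$ and the identity on $X\setminus Y$. In either case $Yf\subseteq Y$, so $f\in \overline{T}(X,Y)$; moreover, the two pieces have images lying in the disjoint sets $Y$ and $X\setminus Y$ and each piece is injective, so $f$ is injective on all of $X$, while it fails to be surjective because $\alpha$ is not. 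Hence $\textnormal{c}(f)=0\neq \textnormal{d}(f)$, so $f$ is not semi-balanced, and consequently $\overline{T}(X,Y)$ is not semi-balanced.

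The argument is essentially routine; the only point requiring care is checking that gluing the two pieces produces a globally injective transformation rather than a merely piecewise injective one, and this follows immediately from the fact that the images of the two pieces are confined to the complementary sets $Y$ and $X\setminus Y$.
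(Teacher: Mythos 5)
Your proposal is correct and follows essentially the same route as the paper: the sufficiency via Lemma \ref{transform-semibal-Xfinite} and Remark \ref{subsemigrp=semi-balanced}, and the necessity by contraposition, splitting into the cases where $Y$ or $X\setminus Y$ is infinite and gluing an injective non-surjective map on the infinite part with the identity on the other part to get a non-semi-balanced element of $\overline{T}(X,Y)$. The only difference is cosmetic (the paper splits on whether $X\setminus Y$ is finite or infinite, and leaves the global injectivity check as routine, which you spell out).
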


\begin{proof}[\textbf{Proof}]
	We prove the necessity part of the theorem by contraposition. Suppose that $X$ is infinite. Then there are two possibilities to consider.

	
	\vspace{0.5mm}	
	\noindent Case 1: Suppose $X\setminus Y$ is finite. Then $Y$ is infinite. Therefore there exists a map $\alpha\colon Y \to Y$ that is injective but not surjective. Define a map $f\colon X\to X$ by
	\begin{align*}
		xf=
		\begin{cases}
			x\alpha &\text{if $x\in Y$}\\
			x & \text{if $x\in X\setminus Y$.}
		\end{cases}
	\end{align*}
	It is routine to verify that $f\in \overline{T}(X,Y)$. However, since $\alpha$ is injective but not surjective, we see that $f$ is injective but not surjective.
	
	\vspace{0.5mm}	
	\noindent Case 2: Suppose $X\setminus Y$ is infinite. Then there exists a map $\beta \colon X\setminus Y \to X\setminus Y$ that is injective but not surjective. Define a map $f\colon X\to X$ by
	\begin{align*}
		xf=
		\begin{cases}
			x & \text{if $x\in Y$}\\
			x\beta & \text{if $x\in X\setminus Y$.}
		\end{cases}
	\end{align*}
	It is routine to verify that $f\in \overline{T}(X,Y)$. However, since $\beta$ is injective but not surjective, we see that $f$ is injective but not surjective.
	
	\vspace{0.5mm}
	Thus, in either case, there exists a map $f\in \overline{T}(X,Y)$ that is injective but not surjective. Therefore $\mbox{c}(f)=0$ but $\mbox{d}(f)\geq 1$. This shows that $f$ is not semi-balanced. Hence $\overline{T}(X,Y)$ is not semi-balanced.
	
	
	\vspace{0.5mm}
	Conversely, suppose that $X$ is finite. Then $T(X)$ is semi-balanced by Theorem \ref{FTS-semibal-finite}. Therefore $\overline{T}(X,Y)$ is semi-balanced by Remark \ref{subsemigrp=semi-balanced}.
\end{proof}


\begin{remark}
	Let $X$ be a finite set. Then $\overline{T}(X,Y)$ is semi-balanced by Theorem \ref{T-XY-semibal-finite}. However, the semigroup $\overline{T}(X,Y)$ is not necessarily unit-regular (cf. Theorem \ref{unit-reg-TXY}).
\end{remark}

From Proposition \ref{unit-semibal}, we know that every unit-regular element in a submonoid $M$ of $T(X)$ is also a semi-balanced element of $M$. Moreover, we note from Theorem \ref{ureg-smbal-equi} that the concepts of unit-regular and semi-balanced elements are equivalent in $T(X)$. However, these concepts are not necessarily equivalent in $L(V)$ as seen from the following example. 

\begin{example}
	Let $V\colonequals\mathbb{Q}[x]$ be the vector space of all polynomials in $x$ over the field $\mathbb{Q}$. Clearly $\{1,x,x^2,\ldots\}$ is a basis for $V$. Consider $f\in L(V)$ such that 
	\begin{eqnarray*}
		p(x)f=
		\begin{cases}
			0     &  \text{if $p(x)=1$}\\
			xp(x) &  \text{if $p(x)\in \{x,x^2,x^3\ldots\}$}.
		\end{cases}
	\end{eqnarray*} 
	It is routine to verify that $N(f)=\langle 1\rangle=\mathbb{Q}$, $R(f)=\langle x^2,x^3,x^4,\ldots \rangle$, and $V/R(f)=\langle 1+R(f), x+R(f)\rangle$. Then $\nul(f)=1$ and $\corank(f)=2$. Therefore $f$ is not unit-regular in $L(V)$ by Corollary \ref{nul-corank-unit-reg-ele}. 
	
	\vspace{0.5mm} 
	Now, we prove that $f$ is semi-balanced. Since $|\mathbb{Q}|=\aleph_0$, we see that $|V|=\aleph_0$ and $|V/R(f)|=\aleph_0$. To see $f$ is semi-balanced, we first prove that $\textnormal{d}(f)=\aleph_0$. For this, fix $u\in V\setminus R(f)$ and define a map $\varphi \colon V\setminus R(f)\to V/R(f)$ by
	\begin{eqnarray*}
		v\varphi=
		\begin{cases}
			R(f)   &  \text{if $v=u$}\\
			v+R(f) &   \text{otherwise}.
		\end{cases}	
	\end{eqnarray*}
	It is clear that $\varphi$ is surjective, since $(u+v)\varphi=u+R(f)$ for all nonzero vector $v$ of $R(f)$. This yields $|V\setminus R(f)|\geq |V/R(f)|$. Therefore, since $|V/R(f)|=\aleph_0$, we get $\textnormal{d}(f)=\aleph_0$. Next, we prove that $\textnormal{c}(f)=\aleph_0$. For this, let $T_f$ be a transversal of the equivalence relation $\ker(f)$. Then we may write $\pi(f)=\{p(x)+N(f)\colon p(x)\in T_f\}$. Note for every $p(x)\in T_f$ that  $|p(x)+N(f)|=|N(f)|=\aleph_0$, so $|p(x)+N(f)\setminus \{p(x)\}|=\aleph_0$. Since $|T_f|\leq |V|$ and
	$V\setminus T_f=\displaystyle\bigcup_{p(x)\in T_f}\big(p(x)+N(f)\setminus \{p(x)\}\big)$,
	we get 
	\[\textnormal{c}(f)=\sum_{p(x)\in T_f}|p(x)+N(f)\setminus \{p(x)\}|=|T_f|\times \aleph_0=\aleph_0.\]
	Thus $\textnormal{c}(f)=\textnormal{d}(f)$ and consequently $f$ is semi-balanced.
	
\end{example}



%


In the next theorem, we give a necessary and sufficient condition for the semigroup $L(V)$ to be semi-balanced.

\begin{theorem}\label{L-V-semi}
	The semigroup $L(V)$ is semi-balanced if and only if $V$ is finite-dimensional.
\end{theorem}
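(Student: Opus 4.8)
The plan is to prove both implications, the only genuine subtlety being that the collapse $\textnormal{c}(f)$ and defect $\textnormal{d}(f)$ are cardinalities of the underlying \emph{sets} $V\setminus T_f$ and $V\setminus R(f)$, not dimensions; over an infinite field a finite-dimensional space has an infinite underlying set, so the finite-$X$ criteria of Theorem \ref{FTS-semibal-finite} and Lemma \ref{transform-semibal-Xfinite} cannot be applied to $T(V)$ directly.

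For the necessity I would argue by contraposition. Assume $V$ is infinite-dimensional and fix a basis $B$, which is then infinite. Choose an injective, non-surjective self-map $\sigma\colon B\to B$ and set $f\colonequals\bar\sigma\in L(V)$. Since $\sigma$ is injective, the vectors in $B\sigma$ are distinct basis vectors and hence linearly independent, so $f$ is injective; since $\sigma$ is not surjective, some basis vector lies outside $\langle B\sigma\rangle=R(f)$, so $f$ is not surjective. Injectivity gives $\ker(f)$ equal to the equality relation and therefore $\textnormal{c}(f)=0$, while $R(f)\neq V$ gives $\textnormal{d}(f)\geq 1$. Thus $\textnormal{c}(f)\neq\textnormal{d}(f)$, so $f$ is not semi-balanced and $L(V)$ is not semi-balanced. (The injective, non-surjective map $\tilde\varphi$ constructed in the proof of Lemma \ref{nul-corank-fin-dim} would serve equally well.)

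For the sufficiency I would route through unit-regularity precisely to bypass the cardinality subtlety. Assume $V$ is finite-dimensional. By Theorem \ref{unit-reg-fin-dim} the semigroup $L(V)$ is unit-regular, and since $L(V)$ is a submonoid of $T(V)$, Corollary \ref{unit-reg--semi-balanced} (equivalently Proposition \ref{unit-semibal}, applied to every element) shows that $L(V)$ is semi-balanced. The main obstacle is thus conceptual rather than computational: one must resist deducing the finite-dimensional case from ``$X$ finite implies $T(X)$ semi-balanced,'' which is false over infinite fields, and instead observe that Theorem \ref{unit-reg-fin-dim} together with Corollary \ref{unit-reg--semi-balanced} correctly yields semi-balancedness at the level of set cardinalities.
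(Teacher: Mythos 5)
Your proof is correct and takes essentially the same route as the paper's: the necessity part uses the identical contraposition construction (the linear extension of an injective, non-surjective self-map of an infinite basis), and the sufficiency part passes through unit-regularity of $L(V)$ and then Corollary \ref{unit-reg--semi-balanced}, the only cosmetic difference being that you cite Theorem \ref{unit-reg-fin-dim} where the paper combines Proposition \ref{alarsf80-unit-facto} with Theorem \ref{facto-fin-dim}. Your observation that one cannot instead invoke Lemma \ref{transform-semibal-Xfinite} for finite-dimensional $V$ (since $V$ is an infinite set over an infinite field) correctly identifies why this detour through unit-regularity is the right argument.
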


\begin{proof}[\textbf{Proof}]
	We prove the necessity part of the theorem by contraposition. Suppose that $V$ is infinite-dimensional. Let $B$ be a basis for $V$. Then, since $B$ is infinite, there exists a map $\alpha \colon B\to B$ that is injective but not surjective. Define a linear map $f\colon V \to V$ by $vf=v\alpha$ for all $v\in B$. It is clear that $f$ is injective but not surjective, since $\alpha$ is injective but not surjective. This gives $\textnormal{c}(f)=0$ and $\textnormal{d}(f)\ge 1$. Therefore $f$ is not semi-balanced. Hence $L(V)$ is not semi-balanced.
	
	\vspace{0.2mm}
	Conversely, suppose that $V$ is finite-dimensional. Then $L(V)$ is unit-regular by combining Proposition \ref{alarsf80-unit-facto} and Theorem \ref{facto-fin-dim}. Therefore $L(V)$ is semi-balanced by Corollary \ref{unit-reg--semi-balanced}.
\end{proof}

Thus, we have the following from Theorems \ref{unit-reg-fin-dim} and \ref{L-V-semi}
\begin{theorem}
	The following statements are equivalent.
	\begin{enumerate}
		\item[\rm(i)] The semigroup $L(V)$ is unit-regular.
		\item[\rm(ii)] The semigroup $L(V)$ is semi-balanced.
		\item[\rm(iii)] The space $V$ is finite-dimensional.	
	\end{enumerate}
\end{theorem}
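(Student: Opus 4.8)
The plan is to establish the three-way equivalence by routing everything through statement (iii), the finite-dimensionality of $V$, since both of the other two statements have already been characterized in exactly those terms. First I would invoke Theorem \ref{unit-reg-fin-dim}, which asserts that $L(V)$ is unit-regular if and only if $V$ is finite-dimensional; this gives the equivalence (i) $\Leftrightarrow$ (iii) outright. Next I would invoke Theorem \ref{L-V-semi}, which asserts that $L(V)$ is semi-balanced if and only if $V$ is finite-dimensional; this gives (ii) $\Leftrightarrow$ (iii). Chaining these two biconditionals through the common statement (iii) yields that (i), (ii), and (iii) are pairwise equivalent, which is precisely the assertion. No independent direction, construction, or case split is required.

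There is essentially no obstacle to overcome at this stage, since the entire substance of the result has already been discharged into the two cited theorems. The genuinely nontrivial work lies behind them: on the unit-regularity side, the characterization of unit-regular elements of $L(V)$ via $\nul(f)=\corank(f)$ (Corollary \ref{nul-corank-unit-reg-ele}) combined with Lemma \ref{nul-corank-fin-dim}; and on the semi-balanced side, the explicit construction, when $V$ is infinite-dimensional, of an injective but non-surjective operator $f$ with $\textnormal{c}(f)=0$ and $\textnormal{d}(f)\ge 1$, together with Corollary \ref{unit-reg--semi-balanced} to handle the finite-dimensional case. Once those inputs are in hand, the present theorem is a purely formal transitivity argument, so the proof reduces to a single sentence citing Theorems \ref{unit-reg-fin-dim} and \ref{L-V-semi}.
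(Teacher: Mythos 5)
Your proposal is correct and matches the paper exactly: the paper derives this theorem precisely by combining Theorem \ref{unit-reg-fin-dim} (giving (i) $\Leftrightarrow$ (iii)) with Theorem \ref{L-V-semi} (giving (ii) $\Leftrightarrow$ (iii)), with no further argument. Your identification of where the real work lies (in the two cited theorems and their supporting lemmas) is also accurate.
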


In the next theorem, we give a necessary and sufficient condition for the semigroup $\overline{L}(V,W)$ to be semi-balanced.

\begin{theorem}\label{semibal-LVW}
	The semigroup $\overline{L}(V,W)$ is semi-balanced if and only if $V$ is finite-dimensional.
\end{theorem}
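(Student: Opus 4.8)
The plan is to mirror the strategy already used for Theorem~\ref{T-XY-semibal-finite}. The sufficiency is almost immediate: if $V$ is finite-dimensional, then $L(V)$ is semi-balanced by Theorem~\ref{L-V-semi}, and since $\overline{L}(V,W)$ is a subsemigroup of $L(V)$, Remark~\ref{subsemigrp=semi-balanced} forces $\overline{L}(V,W)$ to be semi-balanced as well.

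For the necessity I would argue by contraposition, assuming $V$ is infinite-dimensional and exhibiting a single element $f\in\overline{L}(V,W)$ that fails to be semi-balanced. The guiding idea is exactly the one used earlier: an injective but non-surjective map has collapse $0$ and defect at least $1$, hence is not semi-balanced. So it suffices to produce such a map that also leaves $W$ invariant. Fix a complement $U$ of $W$ in $V$, so $V=W\oplus U$. Since $\dim(V)$ is infinite and $\dim(V)=\dim(W)+\codim_V(W)=\dim(W)+\dim(U)$, at least one of $\dim(W)$ and $\dim(U)$ is infinite; this dichotomy plays the role of the ``$X\setminus Y$ finite/infinite'' split in Theorem~\ref{T-XY-semibal-finite}.

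In the case $\dim(W)$ infinite, I would pick, via an injective non-surjective self-map of a basis of $W$, an injective but non-surjective $\alpha\in L(W)$, and define $f$ to act as $\alpha$ on $W$ and as the identity on $U$. In the case $\dim(U)$ infinite, I would do the symmetric thing: take an injective non-surjective $\beta\in L(U)$ and let $f$ act as the identity on $W$ and as $\beta$ on $U$. In either construction $Wf\subseteq W$, so $f\in\overline{L}(V,W)$, and because the range decomposes as a direct sum along $W\oplus U$ with a proper summand on the infinite-dimensional factor, $f$ is injective but not surjective. Thus $\textnormal{c}(f)=0$ while $\textnormal{d}(f)\ge 1$, so $f$ is not semi-balanced, and hence $\overline{L}(V,W)$ is not semi-balanced.

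The only point needing care---the main (minor) obstacle---is verifying that the globally defined $f$ really is injective and non-surjective, not merely on each summand. Injectivity follows because the two pieces have ranges lying in the complementary subspaces $W$ and $U$, so they can overlap only in $0$; non-surjectivity follows because on the infinite-dimensional factor the chosen map omits a basis vector while the other factor is hit exactly. Linearity and the invariance $Wf\subseteq W$ are then routine from the defining action on the bases.
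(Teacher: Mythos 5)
Your proof is correct and follows essentially the same route as the paper: sufficiency via Theorem \ref{L-V-semi} and Remark \ref{subsemigrp=semi-balanced}, and necessity by contraposition, splitting into cases according to which of the two ``factors'' of $V$ is infinite-dimensional and exhibiting an injective, non-surjective element of $\overline{L}(V,W)$. The only difference is cosmetic: you work with a direct-sum complement $U$ of $W$ where the paper works (somewhat informally) with the quotient $V/W$; since $\dim(U)=\codim_V(W)$, the case split and block constructions coincide, and your formulation is if anything the cleaner way to make the paper's second case well-defined.
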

\begin{proof}[\textbf{Proof}]
	We prove the necessity part of the theorem by contraposition. Suppose that $V$ is infinite-dimensional. Then there are two possibilities to consider.
	
	
	\vspace{0.5mm}	
	\noindent Case 1: Suppose $V/W$ is finite-dimensional. Then $W$ is infinite-dimensional. Therefore there exists a linear map $\alpha\colon W \to W$ that is injective but not surjective. Define a map $f\colon V\to V$ by
	\begin{align*}
		vf=
		\begin{cases}
			v\alpha  & \text{if $v\in W$}\\
			v        & \text{if $v\in V/W$.}
		\end{cases}
	\end{align*}
	It is routine to verify that $f\in \overline{L}(V,W)$. However, since $\alpha$ is injective but not surjective, we see that $f$ is injective but not surjective.
	
	\vspace{0.5mm}	
	\noindent Case 2: Suppose $V/W$ is infinite-dimensional. Then there exists a linear map $\beta \colon V/W \to V/W$ that is injective but not surjective. Define a map $f\colon V\to V$ by
	\begin{align*}
		vf=
		\begin{cases}
			v       & \text{if $v\in W$}\\
			v' & \text{if $v\in V\setminus W$, where $(v+W)\beta=v'+W$.}
		\end{cases}
	\end{align*}
	It is routine to verify that $f\in \overline{L}(V,W)$. However, since $\beta$ is injective but not surjective, we see that $f$ is injective but not surjective.
	
	\vspace{0.5mm}
	Thus, in either case, there exists a map $f\in \overline{L}(V,W)$ that is injective but not surjective. This gives $\textnormal{c}(f)=0$ but $\textnormal{d}(f)\geq 1$. Therefore $f$ is not semi-balanced. Hence $\overline{L}(V,W)$ is not semi-balanced.
	
	\vspace{0.2mm}
	Conversely, suppose that $V$ is finite-dimensional. Then $L(V)$ is semi-balanced by Theorem \ref{L-V-semi}. Therefore $\overline{L}(V,W)$ is semi-balanced by Remark \ref{subsemigrp=semi-balanced}.
\end{proof}


\begin{remark}
	Let $V$ be a finite-dimensional vector space. Then $\overline{L}(V,W)$ is semi-balanced by Theorem \ref{semibal-LVW}. However, the semigroup $\overline{L}(V,W)$ is not necessarily unit-regular (cf. Theorem \ref{L-V-W=unit-reg}).
\end{remark}


\end{document}